\newcommand*\circled[1]{\tikz[baseline=(char.base)]{
            \node[shape=circle,draw,inner sep=2pt] (char) {#1};}}
\DeclareRobustCommand{\stirling}{\genfrac\{\}{0pt}{}}
\def\thm@space@setup{%
  \thm@preskip=\parskip \thm@postskip=0pt
}
\newtheorem{thm}{Theorem}[section]
\newtheorem{Q}{Question}[section]
\newtheorem{lemma}{Lemma}[section]
\newtheorem{cor}{Corollary}[section]
\newtheorem{remark}{Remark}
\theoremstyle{definition}
\newcommand{\e}{\varepsilon}
\newcommand{\cV}{\mathcal{V}}
\newcommand{\cR}{\mathcal{R}}
\newcommand{\cD}{\mathcal{D}}
\newcommand{\C}{\mathbb{C}}
\newcommand{\EE}{\mathbb{E}}
\newcommand{\be}{\begin{equation}}
\newcommand{\ee}{\end{equation}}
\DeclareRobustCommand{\stirling}{\genfrac\{\}{0pt}{}}
\title{The distribution of the length of the longest path in random acyclic orientations of a complete bipartite graph}
\author[1]{Jessica Khera}
\author[2]{Erik Lundberg\thanks{The author acknowledges support from the Simons Foundation (grant 712397).}}
\affil[1]{Florida International University, jkhera@fiu.edu}
\affil[2]{Florida Atlantic University, elundber@fau.edu}
\begin{document}

\maketitle

\begin{abstract} 
Randomly sampling an acyclic orientation on the complete bipartite graph $K_{n,k}$ with parts of size $n$ and $k$, we investigate the length of the longest path.  We provide a probability generating function for the distribution of the longest path length, and we use Analytic Combinatorics to perform asymptotic analysis of the probability distribution in the case of equal part sizes $n=k$ tending toward infinity.  We show that the distribution is asymptotically Gaussian, and we obtain precise asymptotics for the mean and variance.  These results address a question asked by Peter J. Cameron.
\noindent 
    \\ Keywords: bipartite graph, directed graph, random graph, acyclic orientation, poly-Bernoulli numbers, lonesum matrices, generating function, analytic combinatorics, asymptotics. 
\end{abstract}

%\begin{center}
%    \large{Asymptotics of Lonesum Matrices}
%\end{center}

%Declarations of Interest: None. \\
%\\
%Corresponding Author: Jessica Khera\\
%Email: jthune@fau.edu \\ 
%Postal Address: \\
%5179 Canal Circle South \\
%Lake Worth, FL 33467 

\section{Introduction}

An \textit{acyclic orientation} of a graph is an assignment of a direction to each edge in the graph in a way that does not form any directed cycles.
The problem of studying the distribution of the length of the longest (directed) path for an acyclic orientation sampled uniformly at random from the collection of all acyclic orientations on a fixed graph has been stated by Peter J. Cameron \cite{CameronSlides}, with particular attention given to the setting when the underlying graph is a complete bipartite graph.

\begin{Q}[P. J. Cameron]\label{q:bipartite}
Given positive integers $n$ and $k$, what is the distribution of the length of the longest path in a random acyclic orientation of $K_{n,k}$ (as usual $K_{n,k}$ denotes the complete bipartite graph with parts of sizes $n$ and $k$)?
\end{Q}

In the current paper, we focus on addressing this probabilistic question with exact (enumerative) and asymptotic results.

One source of motivation for studying longest paths in directed acyclic graphs comes from applied settings where directed acyclic graphs arise, for instance, as dependency graphs for systems of tasks.  In fact, the industry-relevant problem of finding a critical path for schedule planning reduces to finding a longest path in an acyclic directed graph \cite{digraphBook}.  From an algorithmic perspective,  the problem of finding the longest directed path in a given directed acyclic graph can be solved in linear time \cite{digraphBook} (whereas the longest path problem for general graphs is NP-hard).  

We also note that the study of acyclic orientations of graphs is a well-developed and active research area. A few specific examples are given here. The Gallai-Hasse-Roy-Vitaver theorem \cite{Nesetril} finds that the chromatic number of a graph is one more than the length of the longest path of an acyclic orientation chosen to minimize this length. Stanley showed in 1973 \cite{StanleyAO} that the number of acyclic orientations of a graph $G$ with order $n$ is $(-1)^n \chi_G(-1)$, where $\chi_G$ denotes the chromatic polynomial of $G$, and this result was later used to prove that determining the number of acyclic orientations of a graph is an \#P-complete problem \cite{Linial}. In the 1980s, Johnson \cite{Johnson} studied how acyclic orientations of a network with a unique source can be used to compute its reliability. Other research related to acyclic orientations can also be found in \cite{ayyer}, \cite{barbosa}, \cite{gebhard}, \cite{jensen}, \cite{reidys}, and \cite{west}.

In the current paper, we address Question \ref{q:bipartite} with enumerative and asymptotic results.  First, we provide a description of the distribution of the longest path in the form of a probability generating function obtained as a corollary (see Corollary \ref{cor:PGF} below) of the following enumerative result.  As our second main result, we provide an asymptotic limit law for the distribution of the longest path when the part sizes satisfy $n=k \rightarrow \infty$ (see Theorem \ref{thm:asymp} below), showing that the distribution is asymptotically Gaussian as well as providing precise asymptotics for the mean and variance.

\begin{thm}\label{thm:main}
Let $G_{n,k}(\ell)$ denote the number of acyclic orientations on the complete graph $K_{n,k}$ with length of longest path equal to $\ell$. The associated generating function  $\displaystyle F(x,y,u) = \sum_{n,k,\ell \geq 0} G_{n,k}(\ell)u^{\ell}\frac{x^n}{n!}\frac{y^k}{k!}$ satisfies $$F(x,y,u)= \frac{e^{x+y} - (u-1)^2(e^{x}-1)(e^{y}-1)}{1-u^2(e^x-1)(e^y-1)}.$$
\end{thm}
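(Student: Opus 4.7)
The approach is to reduce the enumeration to a bijection with ``leveled'' structures on the vertex set, and then encode the resulting count via a geometric series in the auxiliary variable $u$.

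The first step is to attach to each acyclic orientation $\alpha$ of $K_{n,k}$, with parts $A$ and $B$, the function $\ell:A\cup B\to \mathbb{Z}_{\geq 0}$ defined by letting $\ell(v)$ be the length of the longest directed path in $\alpha$ that ends at $v$; the longest path length of $\alpha$ then equals $L:=\max_v\ell(v)$. I would prove three structural properties of $\ell$: (i) its image is an initial segment $\{0,1,\ldots,L\}$, because each $v$ with $\ell(v)>0$ has an in-neighbor at level $\ell(v)-1$; (ii) no level is shared between an $A$-vertex and a $B$-vertex, since adjacency in $K_{n,k}$ together with acyclicity forces strict inequality of the endpoints' levels; (iii) consecutive levels $i$ and $i+1$ are populated by opposite parts, because an in-neighbor of an $A$-vertex one level below it must lie in $B$, and vice versa. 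These properties package $\alpha$ as an alternating level pattern of length $L+1$, together with an ordered set partition of $A$ into the $A$-levels and of $B$ into the $B$-levels.

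Next I would establish the reverse direction: given any alternating pattern of length $L+1\geq 1$ with nonempty $A$- and $B$-blocks assigned to their prescribed levels, orienting each edge from its lower-level endpoint to its higher-level endpoint produces a well-defined acyclic orientation, and I would recover a length-$L$ directed path by picking one vertex from each level in order, so the associated longest-path length is indeed $L$ and not smaller. The degenerate case $L=0$ requires separate bookkeeping and contributes only when $n=0$ or $k=0$.

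The enumeration then splits by the parity of $L+1$. Writing $A(x)=e^x-1$ and $B(y)=e^y-1$, whose exponential coefficients count ordered set partitions into a prescribed number of nonempty blocks, the contribution of $L=2m-1$ for $m\geq 1$ is $2u^{2m-1}A(x)^m B(y)^m$ (the factor $2$ accounts for which part lies at the bottom), while the contribution of $L=2m$ for $m\geq 1$ is $u^{2m}\bigl(A(x)^{m+1}B(y)^m+A(x)^m B(y)^{m+1}\bigr)$; the $L=0$ stratum contributes $e^x+e^y-1$. Summing the two geometric series and adding the $L=0$ term yields
\[
F(x,y,u)=(e^x+e^y-1)+\frac{2u(e^x-1)(e^y-1)+(e^x+e^y-2)\,u^2(e^x-1)(e^y-1)}{1-u^2(e^x-1)(e^y-1)}.
\]
Placing the right-hand side over the common denominator $1-u^2(e^x-1)(e^y-1)$ and invoking the identity $e^{x+y}=1+(e^x-1)+(e^y-1)+(e^x-1)(e^y-1)$ collapses the numerator to $e^{x+y}-(u-1)^2(e^x-1)(e^y-1)$, as required. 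The main obstacle is a careful verification of the bijection, in particular showing that a candidate level assignment produces an orientation whose longest path has length \emph{exactly} $L$ rather than merely at most $L$, and tracking the degenerate strata with $n=0$ or $k=0$ so that they are neither missed nor double-counted.
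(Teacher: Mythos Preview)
Your proof is correct, and the level-function bijection you propose is a genuinely cleaner route than the one taken in the paper.

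The paper first invokes the bijection between acyclic orientations of $K_{n,k}$ and $n\times k$ lonesum matrices, then proves a lemma expressing the longest path length in terms of the number $m$ of nonzero row/column classes of the matrix together with the presence or absence of all-zero rows and columns (three cases), and finally uses inclusion--exclusion on those cases with the Stirling-number generating functions $\sum_n\stirling{n}{m}x^n/n!$ and $\sum_n\stirling{n+1}{m+1}x^n/n!$ to assemble the odd and even parts of $F$. Your approach bypasses the lonesum-matrix detour entirely: the level function $\ell(v)$ directly produces the alternating ordered-partition structure, and the three cases of the paper's lemma are absorbed into the single parity split on $L$. In fact your levels coincide with the paper's row/column classes (two $A$-vertices share a level iff they have identical rows in the associated matrix), so the combinatorial content is the same; what you gain is a shorter and more transparent derivation that does not rely on Ryser's forbidden-minor characterization or the staircase lemma. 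What the paper's route buys is an explicit connection to the lonesum-matrix and poly-Bernoulli literature, which motivates the form $(m!)^2\stirling{n+1}{m+1}\stirling{k+1}{m+1}$ and the refinements $b_{n,k},c_{n,k},d_{n,k}$.

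Your handling of the degenerate $L=0$ stratum and the algebraic simplification via $e^{x+y}=1+(e^x-1)+(e^y-1)+(e^x-1)(e^y-1)$ are both correct; the resulting odd and even pieces match the paper's equations for $\sum G_{n,k}(2m+1)u^{2m+1}x^n y^k/(n!\,k!)$ and $\sum G_{n,k}(2m)u^{2m}x^n y^k/(n!\,k!)$ exactly.
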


We note that the evaluation $F (x, y, 1)$ at $u = 1$ recovers the known generating function $$B(x,y) = \frac{e^{x+y}}{1-(e^x-1)(e^y-1)}$$ for the total number of acyclic orientations which are enumerated by the poly-Bernoulli numbers, see Section \ref{sec:polyBernoulli} below for further discussion.

Returning to Question \ref{q:bipartite}, we can use the enumerative generating function provided by Theorem \ref{thm:main} to derive, by a simple standard procedure the desired \emph{probability} generating function (PGF) $p_{n,k}(u)$ whose $u^\ell$-coefficient is (by definition) the probability that an acyclic orientation, sampled uniformly at random, has length of its longest path being equal to $\ell$.
Namely, we have the following corollary.

%Using the probability generating function, we can also calculate the mean, by differentiating and evaluating at $u=1$, 

\begin{cor}\label{cor:PGF}
The probability generating polynomial $p_{n,k}(u)$ of the length of the longest path is given by
$$p_{n,k}(u)= \frac{[x^ny^k]F(x,y,u)}{[x^ny^k]F(x,y,1)},$$
with $F(x,y,u)$ as in Theorem \ref{thm:main}.
\end{cor}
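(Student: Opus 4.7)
The plan is to unpack definitions and observe that the factorials cancel in the ratio. First, from the definition of $F$ as a trivariate generating function that is exponential in $x,y$ and ordinary in $u$, extracting the bivariate coefficient produces
\[ [x^n y^k]\, F(x,y,u) \;=\; \frac{1}{n!\,k!}\sum_{\ell \geq 0} G_{n,k}(\ell)\, u^\ell. \]
Specializing to $u=1$ then yields
\[ [x^n y^k]\, F(x,y,1) \;=\; \frac{A_{n,k}}{n!\,k!}, \]
where $A_{n,k} := \sum_{\ell \geq 0} G_{n,k}(\ell)$ is the total number of acyclic orientations of $K_{n,k}$.

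Next, I would invoke the uniform sampling model stated in Question \ref{q:bipartite}: since each acyclic orientation is sampled with probability $1/A_{n,k}$, the probability that the longest path has length $\ell$ equals $G_{n,k}(\ell)/A_{n,k}$. By the definition of a probability generating function, this gives
\[ p_{n,k}(u) \;=\; \sum_{\ell \geq 0} \frac{G_{n,k}(\ell)}{A_{n,k}}\, u^\ell. \]
Forming the ratio of the two coefficient extractions above, the common factor $1/(n!\,k!)$ cancels between numerator and denominator and the total $A_{n,k}$ appears in the denominator, so the ratio equals exactly the displayed $p_{n,k}(u)$.

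There is no genuine obstacle here; the corollary amounts to a routine passage from an enumerative (exponential) generating function to a probability generating function, and the phrase \emph{a simple standard procedure} in the surrounding text reflects this. All of the substantive content resides in Theorem \ref{thm:main}, so the proof will simply record the two coefficient identities and the cancellation.
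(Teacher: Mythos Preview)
Your proposal is correct and matches the paper's approach: the paper does not give an explicit proof of this corollary, simply noting that it follows from Theorem~\ref{thm:main} by ``a simple standard procedure,'' and what you have written is precisely that procedure spelled out.
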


\begin{remark}
Using Corollary \ref{cor:PGF} along with simple properties of probability generating functions, we obtain the mean and variance of the length of the longest path as follows \cite[Ch. III and Appendix A.3]{Flajolet}.
The expectation of the length of the longest path is given by
 $$\frac{\partial p_{n,k}(u)}{\partial u}  \rvert_{u=1} = \frac{[x^ny^k]  F_u(x,y,1) }{[x^ny^k] F(x,y,1)}, $$
and the variance of the length of the longest path is given by
 $$\frac{\partial^2 p_{n,k}(u)}{\partial u ^2}  \rvert_{u=1} +  \frac{\partial p_{n,k}(u)}{\partial u}  \rvert_{u=1} - \left( \frac{\partial p_{n,k}(u)}{\partial u} \rvert_{u=1} \right)^2 . $$
\end{remark}

Notice that in an acyclic orientation of the complete bipartite graph $K_{n,k}$, the length $\ell$ of the longest path cannot exceed $2\min\{n,k\}-1$, so the probability generating function $p_{n,k}(u)$ is a polynomial of this degree. 

\noindent{\bf Example.} Let us consider the simple case $n=k=2$ where it is easy to sketch out all the possible acyclic orientations, which are displayed in Figure \ref{example}. In this case, we have the probability generating function $$p_{2,2} (u) = \frac{1}{7}u +\frac{2}{7}u^2 +\frac{4}{7}u^3,$$
i.e., there is probability $\frac{1}{7}$ that the longest path has length $\ell=1$, probability $\frac{2}{7}$ that the longest path has length $\ell=2$, and probability $\frac{4}{7}$ that the longest path has length $\ell=3$.  The mean is then 
$$ \partial_u p_{2,2}(u) \vert_{u=1} =  \frac{1}{7} +  \frac{2}{7} \cdot 2 +  \frac{4}{7}\cdot 3 = \frac{17}{7}.$$

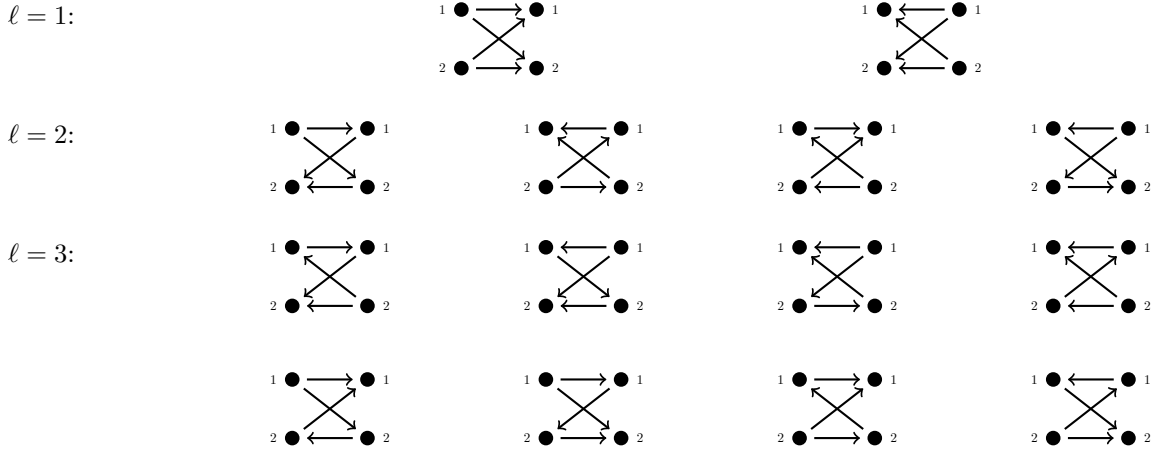
\begin{figure}[!ht]
\begin{multicols}{3}

$\ell=1$: 
\\
\begin{tikzpicture}[thick,
  every node/.style={draw,circle,scale=0.5},
  fsnode/.style={fill=black},
  ssnode/.style={fill=black},
  every fit/.style={ellipse,draw,inner sep=2mm,text width=2mm},
  ->,shorten >= 3pt,shorten <= 3pt, scale=0.5
]

% the vertices of U
\begin{scope}[start chain=going below,node distance=6mm]
\foreach \i in {1,2}
  \node[fsnode,on chain] (f\i) [label=left: \i] {};
\end{scope}

% the vertices of V
\begin{scope}[xshift=2cm,start chain=going below,node distance=6mm]
\foreach \i in {1,2}
  \node[ssnode,on chain] (s\i) [label=right: \i] {};
\end{scope}

% the set U
%\node [myblue,fit=(f1) (f2),label=above:$A$] {};
% the set V
%\node [mygreen,fit=(s1) (s2),label=above:$B$] {};

% the edges
\draw (f1) -- (s1);
\draw (f1) -- (s2);
\draw (f2) -- (s1);
\draw (f2) -- (s2);

\end{tikzpicture}
 
\begin{tikzpicture}[thick,
  every node/.style={draw,circle,scale=0.5},
  fsnode/.style={fill=black},
  ssnode/.style={fill=black},
  every fit/.style={ellipse,draw,inner sep=2mm,text width=2mm},
  ->,shorten >= 3pt,shorten <= 3pt, scale=0.5
]

% the vertices of U
\begin{scope}[start chain=going below,node distance=6mm]
\foreach \i in {1,2}
  \node[fsnode,on chain] (f\i) [label=left: \i] {};
\end{scope}

% the vertices of V
\begin{scope}[xshift=2cm,start chain=going below,node distance=6mm]
\foreach \i in {1,2}
  \node[ssnode,on chain] (s\i) [label=right: \i] {};
\end{scope}

% the set U
%\node [myblue,fit=(f1) (f2),label=above:$A$] {};
% the set V
%\node [mygreen,fit=(s1) (s2),label=above:$B$] {};

% the edges
\draw (s1) -- (f1);
\draw (s1) -- (f2);
\draw (s2) -- (f1);
\draw (s2) -- (f2);

\end{tikzpicture} 
 \end{multicols}  

\begin{multicols}{5}
$\ell=2$: 

\begin{tikzpicture}[thick,
  every node/.style={draw,circle,scale=0.5},
  fsnode/.style={fill=black},
  ssnode/.style={fill=black},
  every fit/.style={ellipse,draw,inner sep=2mm,text width=2mm},
  ->,shorten >= 3pt,shorten <= 3pt, scale=0.5
]

% the vertices of U
\begin{scope}[start chain=going below,node distance=6mm]
\foreach \i in {1,2}
  \node[fsnode,on chain] (f\i) [label=left: \i] {};
\end{scope}

% the vertices of V
\begin{scope}[xshift=2cm,start chain=going below,node distance=6mm]
\foreach \i in {1,2}
  \node[ssnode,on chain] (s\i) [label=right: \i] {};
\end{scope}

% the set U
%\node [myblue,fit=(f1) (f2),label=above:$A$] {};
% the set V
%\node [mygreen,fit=(s1) (s2),label=above:$B$] {};

% the edges
\draw (f1) -- (s1);
\draw (f1) -- (s2);
\draw (s1) -- (f2);
\draw (s2) -- (f2);

\end{tikzpicture} 

\begin{tikzpicture}[thick,
  every node/.style={draw,circle,scale=0.5},
  fsnode/.style={fill=black},
  ssnode/.style={fill=black},
  every fit/.style={ellipse,draw,inner sep=2mm,text width=2mm},
  ->,shorten >= 3pt,shorten <= 3pt, scale=0.5
]

% the vertices of U
\begin{scope}[start chain=going below,node distance=6mm]
\foreach \i in {1,2}
  \node[fsnode,on chain] (f\i) [label=left: \i] {};
\end{scope}

% the vertices of V
\begin{scope}[xshift=2cm,start chain=going below,node distance=6mm]
\foreach \i in {1,2}
  \node[ssnode,on chain] (s\i) [label=right: \i] {};
\end{scope}

% the set U
%\node [myblue,fit=(f1) (f2),label=above:$A$] {};
% the set V
%\node [mygreen,fit=(s1) (s2),label=above:$B$] {};

% the edges
\draw (s1) -- (f1);
\draw (f2) -- (s1);
\draw (s2) -- (f1);
\draw (f2) -- (s2);

\end{tikzpicture} 

\begin{tikzpicture}[thick,
  every node/.style={draw,circle,scale=0.5},
  fsnode/.style={fill=black},
  ssnode/.style={fill=black},
  every fit/.style={ellipse,draw,inner sep=2mm,text width=2mm},
  ->,shorten >= 3pt,shorten <= 3pt, scale=0.5
]

% the vertices of U
\begin{scope}[start chain=going below,node distance=6mm]
\foreach \i in {1,2}
  \node[fsnode,on chain] (f\i) [label=left: \i] {};
\end{scope}

% the vertices of V
\begin{scope}[xshift=2cm,start chain=going below,node distance=6mm]
\foreach \i in {1,2}
  \node[ssnode,on chain] (s\i) [label=right: \i] {};
\end{scope}

% the set U
%\node [myblue,fit=(f1) (f2),label=above:$A$] {};
% the set V
%\node [mygreen,fit=(s1) (s2),label=above:$B$] {};

% the edges
\draw (f1) -- (s1);
\draw (f2) -- (s1);
\draw (s2) -- (f1);
\draw (s2) -- (f2);

\end{tikzpicture} 

\begin{tikzpicture}[thick,
  every node/.style={draw,circle,scale=0.5},
  fsnode/.style={fill=black},
  ssnode/.style={fill=black},
  every fit/.style={ellipse,draw,inner sep=2mm,text width=2mm},
  ->,shorten >= 3pt,shorten <= 3pt, scale=0.5
]

% the vertices of U
\begin{scope}[start chain=going below,node distance=6mm]
\foreach \i in {1,2}
  \node[fsnode,on chain] (f\i) [label=left: \i] {};
\end{scope}

% the vertices of V
\begin{scope}[xshift=2cm,start chain=going below,node distance=6mm]
\foreach \i in {1,2}
  \node[ssnode,on chain] (s\i) [label=right: \i] {};
\end{scope}

% the set U
%\node [myblue,fit=(f1) (f2),label=above:$A$] {};
% the set V
%\node [mygreen,fit=(s1) (s2),label=above:$B$] {};

% the edges
\draw (s1) -- (f1);
\draw (s1) -- (f2);
\draw (f1) -- (s2);
\draw (f2) -- (s2);

\end{tikzpicture} 

\end{multicols}

\begin{multicols}{5}
$\ell=3$: 

\vspace{1in}

\begin{tikzpicture}[thick,
  every node/.style={draw,circle,scale=0.5},
  fsnode/.style={fill=black},
  ssnode/.style={fill=black},
  every fit/.style={ellipse,draw,inner sep=2mm,text width=2mm},
  ->,shorten >= 3pt,shorten <= 3pt, scale=0.5
]

% the vertices of U
\begin{scope}[start chain=going below,node distance=6mm]
\foreach \i in {1,2}
  \node[fsnode,on chain] (f\i) [label=left: \i] {};
\end{scope}

% the vertices of V
\begin{scope}[xshift=2cm,start chain=going below,node distance=6mm]
\foreach \i in {1,2}
  \node[ssnode,on chain] (s\i) [label=right: \i] {};
\end{scope}

% the set U
%\node [myblue,fit=(f1) (f2),label=above:$A$] {};
% the set V
%\node [mygreen,fit=(s1) (s2),label=above:$B$] {};

% the edges
\draw (f1) -- (s1);
\draw (s2) -- (f2);
\draw (s2) -- (f1);
\draw (s1) -- (f2);

\end{tikzpicture} 
\vspace{0.25in}

\begin{tikzpicture}[thick,
  every node/.style={draw,circle,scale=0.5},
  fsnode/.style={fill=black},
  ssnode/.style={fill=black},
  every fit/.style={ellipse,draw,inner sep=2mm,text width=2mm},
  ->,shorten >= 3pt,shorten <= 3pt, scale=0.5
]

% the vertices of U
\begin{scope}[start chain=going below,node distance=6mm]
\foreach \i in {1,2}
  \node[fsnode,on chain] (f\i) [label=left: \i] {};
\end{scope}

% the vertices of V
\begin{scope}[xshift=2cm,start chain=going below,node distance=6mm]
\foreach \i in {1,2}
  \node[ssnode,on chain] (s\i) [label=right: \i] {};
\end{scope}

% the set U
%\node [myblue,fit=(f1) (f2),label=above:$A$] {};
% the set V
%\node [mygreen,fit=(s1) (s2),label=above:$B$] {};

% the edges
\draw (f1) -- (s1);
\draw (f1) -- (s2);
\draw (f2) -- (s1);
\draw (s2) -- (f2);

\end{tikzpicture} 

\begin{tikzpicture}[thick,
  every node/.style={draw,circle,scale=0.5},
  fsnode/.style={fill=black},
  ssnode/.style={fill=black},
  every fit/.style={ellipse,draw,inner sep=2mm,text width=2mm},
  ->,shorten >= 3pt,shorten <= 3pt, scale=0.5
]

% the vertices of U
\begin{scope}[start chain=going below,node distance=6mm]
\foreach \i in {1,2}
  \node[fsnode,on chain] (f\i) [label=left: \i] {};
\end{scope}

% the vertices of V
\begin{scope}[xshift=2cm,start chain=going below,node distance=6mm]
\foreach \i in {1,2}
  \node[ssnode,on chain] (s\i) [label=right: \i] {};
\end{scope}

% the set U
%\node [myblue,fit=(f1) (f2),label=above:$A$] {};
% the set V
%\node [mygreen,fit=(s1) (s2),label=above:$B$] {};

% the edges
\draw (s1) -- (f1);
\draw (f1) -- (s2);
\draw (s1) -- (f2);
\draw (s2) -- (f2);

\end{tikzpicture} 
\vspace{0.25in}

\begin{tikzpicture}[thick,
  every node/.style={draw,circle,scale=0.5},
  fsnode/.style={fill=black},
  ssnode/.style={fill=black},
  every fit/.style={ellipse,draw,inner sep=2mm,text width=2mm},
  ->,shorten >= 3pt,shorten <= 3pt, scale=0.5
]

% the vertices of U
\begin{scope}[start chain=going below,node distance=6mm]
\foreach \i in {1,2}
  \node[fsnode,on chain] (f\i) [label=left: \i] {};
\end{scope}

% the vertices of V
\begin{scope}[xshift=2cm,start chain=going below,node distance=6mm]
\foreach \i in {1,2}
  \node[ssnode,on chain] (s\i) [label=right: \i] {};
\end{scope}

% the set U
%\node [myblue,fit=(f1) (f2),label=above:$A$] {};
% the set V
%\node [mygreen,fit=(s1) (s2),label=above:$B$] {};

% the edges
\draw (f1) -- (s1);
\draw (f1) -- (s2);
\draw (s1) -- (f2);
\draw (f2) -- (s2);

\end{tikzpicture} 

\begin{tikzpicture}[thick,
  every node/.style={draw,circle,scale=0.5},
  fsnode/.style={fill=black},
  ssnode/.style={fill=black},
  every fit/.style={ellipse,draw,inner sep=2mm,text width=2mm},
  ->,shorten >= 3pt,shorten <= 3pt, scale=0.5
]

% the vertices of U
\begin{scope}[start chain=going below,node distance=6mm]
\foreach \i in {1,2}
  \node[fsnode,on chain] (f\i) [label=left: \i] {};
\end{scope}

% the vertices of V
\begin{scope}[xshift=2cm,start chain=going below,node distance=6mm]
\foreach \i in {1,2}
  \node[ssnode,on chain] (s\i) [label=right: \i] {};
\end{scope}

% the set U
%\node [myblue,fit=(f1) (f2),label=above:$A$] {};
% the set V
%\node [mygreen,fit=(s1) (s2),label=above:$B$] {};

% the edges
\draw (s1) -- (f1);
\draw (s1) -- (f2);
\draw (s2) -- (f1);
\draw (f2) -- (s2);

\end{tikzpicture} 
\vspace{0.25in}

\begin{tikzpicture}[thick,
  every node/.style={draw,circle,scale=0.5},
  fsnode/.style={fill=black},
  ssnode/.style={fill=black},
  every fit/.style={ellipse,draw,inner sep=2mm,text width=2mm},
  ->,shorten >= 3pt,shorten <= 3pt, scale=0.5
]

% the vertices of U
\begin{scope}[start chain=going below,node distance=6mm]
\foreach \i in {1,2}
  \node[fsnode,on chain] (f\i) [label=left: \i] {};
\end{scope}

% the vertices of V
\begin{scope}[xshift=2cm,start chain=going below,node distance=6mm]
\foreach \i in {1,2}
  \node[ssnode,on chain] (s\i) [label=right: \i] {};
\end{scope}

% the set U
%\node [myblue,fit=(f1) (f2),label=above:$A$] {};
% the set V
%\node [mygreen,fit=(s1) (s2),label=above:$B$] {};

% the edges
\draw (f1) -- (s1);
\draw (s2) -- (f1);
\draw (f2) -- (s2);
\draw (f2) -- (s1);

\end{tikzpicture} 

\begin{tikzpicture}[thick,
  every node/.style={draw,circle,scale=0.5},
  fsnode/.style={fill=black},
  ssnode/.style={fill=black},
  every fit/.style={ellipse,draw,inner sep=2mm,text width=2mm},
  ->,shorten >= 3pt,shorten <= 3pt, scale=0.5
]

% the vertices of U
\begin{scope}[start chain=going below,node distance=6mm]
\foreach \i in {1,2}
  \node[fsnode,on chain] (f\i) [label=left: \i] {};
\end{scope}

% the vertices of V
\begin{scope}[xshift=2cm,start chain=going below,node distance=6mm]
\foreach \i in {1,2}
  \node[ssnode,on chain] (s\i) [label=right: \i] {};
\end{scope}

% the set U
%\node [myblue,fit=(f1) (f2),label=above:$A$] {};
% the set V
%\node [mygreen,fit=(s1) (s2),label=above:$B$] {};

% the edges
\draw (s1) -- (f1);
\draw (f2) -- (s1);
\draw (s2) -- (f1);
\draw (s2) -- (f2);

\end{tikzpicture} 
\vspace{0.25in}

\begin{tikzpicture}[thick,
  every node/.style={draw,circle,scale=0.5},
  fsnode/.style={fill=black},
  ssnode/.style={fill=black},
  every fit/.style={ellipse,draw,inner sep=2mm,text width=2mm},
  ->,shorten >= 3pt,shorten <= 3pt, scale=0.5
]

% the vertices of U
\begin{scope}[start chain=going below,node distance=6mm]
\foreach \i in {1,2}
  \node[fsnode,on chain] (f\i) [label=left: \i] {};
\end{scope}

% the vertices of V
\begin{scope}[xshift=2cm,start chain=going below,node distance=6mm]
\foreach \i in {1,2}
  \node[ssnode,on chain] (s\i) [label=right: \i] {};
\end{scope}

% the set U
%\node [myblue,fit=(f1) (f2),label=above:$A$] {};
% the set V
%\node [mygreen,fit=(s1) (s2),label=above:$B$] {};

% the edges
\draw (s1) -- (f1);
\draw (f2) -- (s1);
\draw (f1) -- (s2);
\draw (f2) -- (s2);

\end{tikzpicture} 
\end{multicols}
\caption{There are $B_{2,2} = 14$ possible acyclic orientations of $K_{2,2}$. 
Here they are arranged according to the length $\ell$ of the longest path.  Two of them have $\ell =1$, four of them have $\ell=2$, and the remaining eight orientations have $\ell=3$.}
\label{example}
\end{figure}

%Symbolic computational software such as Maple or Mathematica can be used to compute high-order multivariate Taylor expansions of $F(x,y,u)$ as well to perform coefficient extractions to obtain probability generating polynomials $p_{n,k}(u)$.  
Plotting the probability distribution for several choices of $n=k$, i.e., plotting the coefficient $[u^\ell] p_{n,k}(u)$ with respect to $\ell$, we notice a characteristic bell-shape (apparent unimodality and apparent log-concavity) that may lead one to suspect that the distribution is asymptotically Gaussian (see Figure \ref{fig:plots}). This turns out to be true, as we show in our main result, Theorem \ref{thm:asymp} below.  We note that showing asymptotic Gaussianity in this case is not as simple as applying the classical central limit theorem (as there is no obvious way to view our statistic as a sum of independent random variables); rather, we establish a central limit type theorem (while also providing precise asymptotics for the mean and variance) using methods of Analytic Combinatorics to prove the following result.

\begin{figure}
\centering
\includegraphics[scale=0.4]{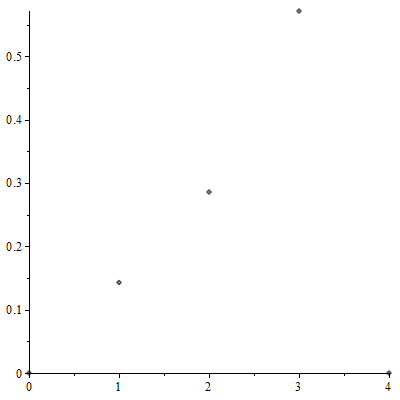}
\includegraphics[scale=0.4]{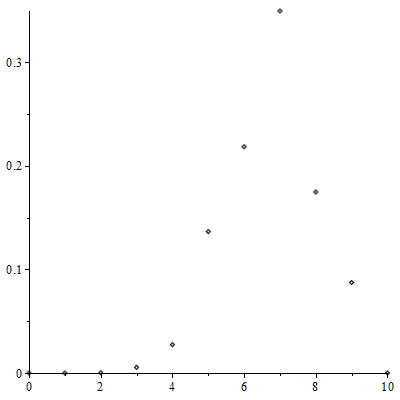}
\includegraphics[scale=0.36]{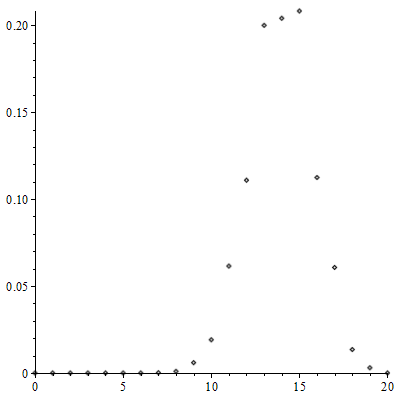} 
\includegraphics[scale=0.36]{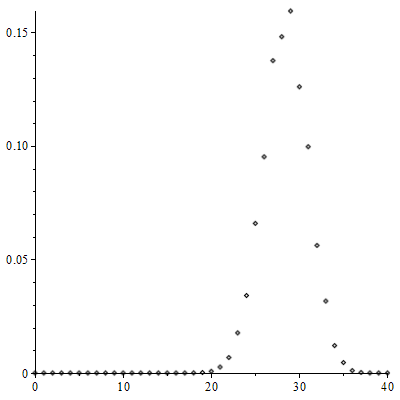}
\includegraphics[scale=0.36]{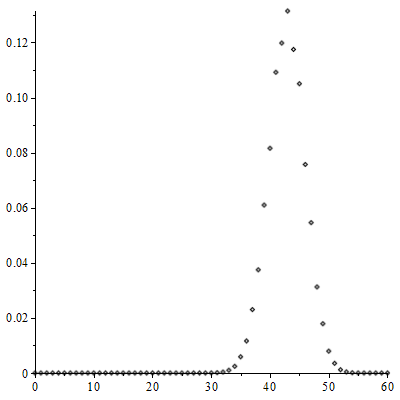}
\caption{Plots showing the probabilities of the longest path having length $\ell$, for bipartite graphs with parts of equal size $n=k$ for $n=2$, $n=5$, $n=10$, $n=20$, and $n=30$.}
\label{fig:plots}
\end{figure}

\begin{thm}\label{thm:asymp}
Let $X_n$ denote the random variable equal to the length of the longest path in a random acyclic orientation of $K_{n,n}$. Let $\mu_n \coloneqq \mathbb{E} X_n$ and $\sigma_n^2 \coloneqq \sqrt{\mathbb{E}(X_n-\mathbb{E}X_n)^2}$. 
Then the random variable $$Y_n \coloneqq \frac{X_n-\mu_n}{\sigma_n^2}$$ converges in distribution to the standard Gaussian as $n \rightarrow \infty$. 
Moreover, the mean $\mu_n$ and variance $\sigma_n^2$ of $X_n$ satisfy the following asymptotics $$\mu_n =  \frac{n}{\log 2} + \frac{8(\log 2)^2-9 \log 2 + 2}{4 \log 2 (1-\log 2)} + O(n^{-1}), \quad \text{as \, } n \rightarrow \infty$$ and 
$$\sigma_n^2 = \frac{n (1-\log 2)}{2 (\log 2)^2} + \frac{-2 (\log 2)^4 + (\log 2)^3 + 2 (\log 2)^2 - 6 \log 2 + 2}{8 (\log 2)^2 (1- \log 2)^2} + O(n^{-1}), \quad \text{as \, } n \rightarrow \infty.$$ 
\end{thm}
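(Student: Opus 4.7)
The plan is to apply Hwang's quasi-powers theorem to the probability generating function $p_{n,n}(u)$ from Corollary \ref{cor:PGF}. It suffices to obtain a uniform asymptotic expansion of $[x^n y^n] F(x,y,u)$ for $u$ in a complex neighborhood of $1$ and then form the ratio. With $F = N/D$ where $N(x,y,u) = e^{x+y} - (u-1)^2(e^x-1)(e^y-1)$ and $D(x,y,u) = 1 - u^2(e^x-1)(e^y-1)$, I would extract this diagonal coefficient via multivariate analytic combinatorics, i.e., the smooth-point formula for a meromorphic bivariate generating function applied on the singular variety $V_u = \{D=0\}$.

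The critical-point equations $x D_x = y D_y$ on $V_u$, governing the direction $(1,1)$, reduce by the $x \leftrightarrow y$ symmetry of $F$ to $x = y$ together with $u^2(e^x-1)^2 = 1$, yielding a unique contributing critical point $(x_0(u), y_0(u)) = (\rho(u),\rho(u))$ with
$$\rho(u) := \log(1 + 1/u).$$
A welcome simplification is that the numerator evaluates cleanly there:
$$N(\rho(u),\rho(u),u) = (1+1/u)^2 - (u-1)^2/u^2 = 4/u.$$
After verifying that $(\rho(u),\rho(u))$ is the unique minimal point on $V_u$ for $u$ real near $1$ and extending by continuity to a complex neighborhood, the smooth-point asymptotic delivers
$$[x^n y^n] F(x,y,u) = \frac{C(u)}{\sqrt{n}}\,\rho(u)^{-2n}\bigl(1 + O(n^{-1})\bigr),$$
uniformly in $u$ near $1$, where $C(u)$ is analytic and built explicitly from $N$, the partial derivative $D_y$ at the critical point, and the Hessian curvature factor. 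Dividing by the $u=1$ expansion turns $p_{n,n}(u)$ into the quasi-powers form
$$p_{n,n}(u) = A(u)\,B(u)^{2n}\bigl(1 + O(n^{-1})\bigr), \qquad A(u) := \frac{C(u)}{C(1)}, \quad B(u) := \frac{\log 2}{\log(1+1/u)},$$
to which Hwang's theorem applies and yields the Gaussian limit law for $Y_n$, together with $\mu_n = 2n\cdot B'(1)/B(1) + A'(1)/A(1) + O(n^{-1})$ and an analogous two-term expansion for $\sigma_n^2$.

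To recover the stated constants, one computes from $\rho'(1) = -1/2$ and $\rho''(1) = 3/4$ that $B'(1)/B(1) = 1/(2\log 2)$ and $B''(1)/B(1) + B'(1)/B(1) - (B'(1)/B(1))^2 = (1-\log 2)/(4(\log 2)^2)$, which reproduce the leading-order terms $n/\log 2$ and $n(1-\log 2)/(2(\log 2)^2)$ in $\mu_n$ and $\sigma_n^2$; the subleading constants then emerge by expanding $\log A(u)$ to second order at $u=1$ using the explicit form of $C(u)$. I expect the main obstacle to be securing the asymptotic with $O(n^{-1})$ accuracy \emph{uniformly} in a complex disk around $u=1$: establishing absence of competing singularities as $u$ varies off the real axis and carrying the saddle-point expansion to the next order both require careful bookkeeping. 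The $1-\log 2$ appearing in the denominators of the stated constants is the signature of the curvature factor $1/\sqrt{\mathcal{Q}(u)}$ at $u=1$, so once $C(u)$ is tracked to the required precision the extraction of $A'(1)/A(1)$ (and its variance counterpart) reduces to a differentiation exercise.
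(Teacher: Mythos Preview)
Your proposal is correct and follows essentially the same route as the paper: apply the Pemantle--Wilson smooth-point asymptotic to $[x^ny^n]F(x,y,u)$ at the diagonal critical point $x=y=\log(1+1/u)$, divide by the $u=1$ case to obtain a quasi-power form, and invoke Hwang's theorem. The one place where the paper invests substantially more effort than your sketch suggests is the strict minimality of $(\rho(u),\rho(u))$ for \emph{complex} $u$ near $1$---this is not a property that simply ``extends by continuity'' from the real case, and the paper handles it via a separate geometric lemma comparing curvatures of the boundary curve $\theta\mapsto u(e^{re^{i\theta}}-1)$ against an auxiliary tangent circle and its image under inversion; you correctly flag this as the main obstacle, so there is no real gap.
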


Notice that, since the variance is asymptotically proportional to the mean, an application of Chebyshev's inequality shows that $X_n$ concentrates near its mean. Indeed, for an arbitrary $\e>0$ we have
\be
P(|X_n - \mu_n|> n^{\frac{1}{2}+\e}) \leq \frac{\sigma_n^2}{n^{1+2\e}} = O(n^{-2\e}) = o(1), \quad (n \rightarrow \infty).
\ee
In particular, taking $0<\e<1/2$ we have that the length of the longest path is asymptotic to its mean, i.e., $X_n = \mu_n (1+o(1))$, with high probability.

\subsection{Other enumeration problems related to the poly-Bernoulli numbers}\label{sec:polyBernoulli}

Theorem \ref{thm:main} can be viewed as a refinement of the following enumerative result due to Cameron, Glass, and Schumacher \cite{CameronPreprint}.  The statement of the result uses the standard notation $\stirling{n}{k}$ (read ``$n$ brace $k$") to denote the \textit{Stirling numbers of the second kind} that count the number of ways to partition an $n$ element set into $k$ nonempty subsets \cite{stanley}.  
\begin{thm}
The number of acyclic orientations on the complete bipartite graph $K_{n,k}$ is given by the poly-Bernoulli number
\be\label{eq:formula}
B_{n,k} = \sum_{m \geq 0} (m!)^2 \stirling{n+1}{m+1} \stirling{k+1}{m+1}.
\ee
\end{thm}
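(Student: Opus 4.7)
I would prove the theorem by setting up a bijection between acyclic orientations of $K_{n,k}$ and the $n\times k$ \emph{lonesum matrices}---the $\{0,1\}$-matrices uniquely determined by their row and column sums---and then invoking the known enumeration of lonesum matrices by poly-Bernoulli numbers due to Brewbaker and Kaneko.

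Labeling the parts $A=\{a_1,\ldots,a_n\}$ and $B=\{b_1,\ldots,b_k\}$, each orientation corresponds to the matrix $M$ with $M_{ij}=1$ if the edge $a_ib_j$ is oriented $a_i\to b_j$ and $M_{ij}=0$ otherwise. The central lemma is that the orientation is acyclic if and only if $M$ contains no $2\times 2$ submatrix of the form $\bigl(\begin{smallmatrix}1&0\\0&1\end{smallmatrix}\bigr)$ or $\bigl(\begin{smallmatrix}0&1\\1&0\end{smallmatrix}\bigr)$. These patterns exactly encode directed $4$-cycles, so their absence is necessary. For sufficiency I would use a shortening argument: if a minimal directed cycle $v_1\to v_2\to\cdots\to v_{2\ell}\to v_1$ has length $2\ell\geq 6$, then $v_1$ and $v_4$ lie in opposite parts of $K_{n,k}$ so the edge between them is present, and whichever way it is oriented it produces a strictly shorter directed cycle (of length $4$ or $2\ell-2$), contradicting minimality. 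Hence every cycle implies a $4$-cycle, so avoiding the forbidden patterns is sufficient. By a classical theorem of Ryser, the matrices so characterized are precisely the lonesum matrices.

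To count lonesum $n\times k$ matrices I would re-derive (or cite) the Brewbaker-Kaneko formula. The structural starting point is that the distinct row vectors of a lonesum matrix are pairwise comparable under coordinate-wise order---two incomparable rows would immediately produce a forbidden submatrix---so they form a chain, and the same holds for columns. Stratifying by the length $m+1$ of this common chain, one constructs a bijection between lonesum matrices and triples consisting of a set partition of $[n+1]$ into $m+1$ blocks (assigning each row to its type, with a phantom element absorbing the zero-row type), a set partition of $[k+1]$ into $m+1$ blocks (symmetrically for columns), and a pair of linear orderings of the nontrivial types that align the two chains. The set-partition pair contributes $\stirling{n+1}{m+1}\stirling{k+1}{m+1}$, the orderings contribute $(m!)^2$, and summing over $m$ recovers the right-hand side of \eqref{eq:formula}.

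The main obstacle is the precise setup of this bijection, in particular verifying that the two linear orderings of types uniquely pin down the matrix entries and that the phantom markers $n+1$ and $k+1$ consistently absorb the degenerate row/column types in every case. This bookkeeping is well-documented in the poly-Bernoulli literature, and as a consistency check, setting $u=1$ in Theorem~\ref{thm:main} recovers the known exponential generating function $e^{x+y}/(1-(e^x-1)(e^y-1))$ for the poly-Bernoulli numbers, in agreement with \eqref{eq:formula}.
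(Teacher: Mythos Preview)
Your proposal is correct and follows essentially the same route as the paper: the paper establishes the result by combining its Theorem~\ref{thm:bijection} (the bijection between acyclic orientations of $K_{n,k}$ and $n\times k$ lonesum matrices via the same edge-to-entry encoding and the same $4$-cycle shortening argument you outline) with its Theorem~\ref{thm:polyB} (the Brewbaker count of lonesum matrices, proved by adjoining an all-zero row and column, partitioning rows and columns by their sums into $m+1$ classes, and ordering the $m$ nonzero classes on each side). Your ``phantom markers $n+1$ and $k+1$'' are exactly the paper's appended zero row and column, and your pair of linear orderings of the nontrivial types is the paper's two factors of $m!$.
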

Table \ref{table:Bnk} shows some values of $B_{n,k}$.

Within the number-theoretic setting in which the poly-Bernoulli numbers were introduced, $B_{n,k}$ is actually referred to as a poly-Bernoulli number with one index negative $B_{n,k} = B_n^{(-k)}$.  Namely, the traditional poly-Bernoulli numbers $B_n^{(k)}$ were introduced by Kaneko \cite{kaneko} as a generalization of the Bernoulli numbers (with $B_n^{(1)}$ being the usual Bernoulli numbers), defined  as $$\frac{Li_k(1-e^{-x})}{1-e^{-x}}=\sum_{n=0}^{\infty} B_n^{(k)} \frac{x^n}{n!}$$ where $Li$ is the polylogarithm. They were later explored in connection to zeta functions and multiple zeta values~\cite{ArakawaKaneko1999} by Arakawa and Kaneko who also established \cite{Kaneko2} the above formula \eqref{eq:formula} for $B_n^{(-k)}$. For further more recent number-theoretic developments related to poly-Bernoulli numbers see \cite{benyi3}, \cite{benyi4}, \cite{benyi5}, \cite{BrewbakerPaper}, \cite{kaneko}, and \cite{matsusaka}.  From a combinatorial perspective, we are solely interested in the poly-Bernoulli numbers with negative $k$ indices, $B_{n,k} \coloneqq B_n^{(-k)}$, which we will simply refer to as poly-Bernoulli numbers.  

It is noteworthy that the counting array $B_{n,k}$ has appeared as the solution to many enumerative problems in addition to the above-mentioned case of acyclic orientations on complete bipartite graphs.
Several combinatorial interpretations of $B_{n,k}$ are presented by B\'enyi and Hajnal in \cite{benyi1} and \cite{benyi2}.

One such interpretation is that of \emph{Callan permutations}, which are permutations of $\{1,\dots,n+k\}$ in which the elements of the sets $\{1,\dots,n\}$ and $\{n+1,\dots,n+k\}$ each appear in increasing order. Another is that of \emph{Vesztergombi permutations}, which are permutations $\pi$ of $\{1,\dots,n+k\}$ satisfying $-k \leq \pi(i)-i \leq n$. The enumeration of such permutations originally appeared in Vesztergombi~\cite{VesztergombiPerm}, using generating functions.

For the purposes of the current paper, the most important combinatorial class, besides acyclic orientations, enumerated by the poly-Bernoulli numbers is the class of lonesum matrices. As shown by Brewbaker \cite{BrewbakerThesis}, the number of $n \times k$ lonesum matrices is given by $B_{n,k}$ (see Theorem \ref{thm:polyB}).  Another interpretation which is somewhat related to the class of lonesum matrices is that of so-called \emph{$\Gamma$-free matrices}. These are $n \times k$ $(0,1)$-matrices that avoid the $2 \times 2$ submatrices in which 1s appear in a configuration forming the shape of the letter $\Gamma$, i.e., those that avoid $\begin{bmatrix}1 & 1 \\ 1 & 0\end{bmatrix}$ and $\begin{bmatrix}1 & 1 \\ 1 & 1\end{bmatrix}$ as submatrices. 

The poly-Bernoulli numbers $B_{n,n}$ also appear in Algebraic Statistics via the theory of matrix Schubert varieties~\cite{FRS}, where they count the number of strata in a certain stratification of the space of $n \times n$ matrices.

Additional combinatorial interpretations, including so-called ``parades'', are pointed out in the recent survey \cite{KnuthPreprint} by Knuth, who, based on their variety of combinatorial properties,  deems poly-Bernoulli numbers to be a ``a counterexample to the hypothesis that all of the important `special numbers'
were discovered long, long ago.''

\begin{table}\label{table:Bnk}
\centering
\begin{tabular}{| c || c | c | c | c | c | c | c |}
\hline 
n/k & 0 & 1 & 2 & 3 & 4 & 5 & 6\\ 
\hline\hline 
0 & 1 & 1 & 1 & 1 & 1  & 1 & 1\\
\hline
1 & 1 & 2 & 4 & 8 & 16  & 32 & 64\\ 
\hline 
2 & 1 & 4 & 14 & 46 & 146 & 454 & 1394\\ 
\hline 
3 & 1 & 8 & 46 & 230 & 1066 & 4718 & 20266\\
\hline 
4 & 1 & 16 & 146 & 1066 & 6906 & 41506 & 237686\\
\hline
5 & 1 & 32 & 454 & 4718 & 41506 & 329462 & 2441314\\
\hline
6 & 1 & 64 & 1394 & 20266 & 237686 & 2441314 & 22934774 \\
\hline
\end{tabular}
\caption{Some values of the poly-Bernoulli numbers, $B_{n,k}$.}
\end{table}

\subsection{Sketch of the main ideas underlying the proofs}

Our proof of the enumerative result Theorem \ref{thm:main} is based on elementary combinatorics along with basic manipulation of generating functions, and by reviewing a few preliminary results in Section \ref{sec:prelim} we are able to keep the presentation of the proof self-contained, and a reader interested in understanding our enumerative (non-asymptotic) results may dive right into the proofs.  On the other hand, the proof of Theorem \ref{thm:asymp} includes an application of two rather high-level results from Analytic Combinatorics, with each of those results being treated essentially as a black box.  The main work needed for applying those results consists of establishing some technical lemmas whose proofs, while interesting for the geometric and complex analytic techniques they introduce to this setting, do not shed light on the essential ideas behind the convergence to a Gaussian limit.  Thus, in order to provide some basic insight, let us present in broad strokes the proof here while omitting the technicalities in favor of conveying just the main ideas underlying the results from Analytic Combinatorics.

Let $p_n(u)$ denote (for $k=n$) the probability generating function provided by Corollary \ref{cor:PGF}.  Proving the desired Gaussian limit hinges on establishing (uniformly near $u=1$) a so-called ``quasi-power'' asymptotic condition, namely,
\be\label{eq:quasipowcond}
p_n(u)=A(u) \cdot B(u)^{n} \left( 1+ O \left( n^{-1} \right) \right),
\ee
with $A(u), B(u)$ analytic near $u=1$.
That this quasi-power condition is sufficient for the desired Gaussian limit follows from a result in Analytic Combinatorics (the first of the two ``high-level'' results mentioned above) which can be proved using a version of the classical L\'evy continuity principle, which says that pointwise convergence of the characteristic function (Fourier Transform) of a sequence of random variables implies convergence in distribution.  To illustrate, let us sketch a similar application of L\'evy's continuity principle within a simpler setting, the case of the binomial random variable $X_n$ for the number of heads in $n$ trials of a fair coin.  The PGF is $p_n(u) = (\frac{1+u}{2})^n$ which is already in the form of a quasi-power (in fact, an exact power).  The mean is $\mu_n=n/2$, and the variance is $\sigma_n^2=n/4$.  
The characteristic function of $X_n$ is $\EE e^{it X_n} = p_n(e^{i t}) = (\frac{1+e^{i t}}{2})^n$, and the characteristic function of the \emph{rescaled} random variable $(X_n - \mu_n)/\sigma_n$ is $\EE e^{it (X_n-\mu_n)/\sigma_n} = e^{-it\sqrt{n}} \EE e^{it X_n/\sigma_n} =  (\frac{e^{-it/\sqrt{n}}+e^{i t/\sqrt{n}}}{2})^n = \cos^n(t/\sqrt{n})$ which converges pointwise as $n \rightarrow \infty$ to $e^{-t^2/2}$, the Fourier inversion of which is the standard Gaussian. 
 Hence, by the continuity principle we recover the well-known fact (which is a special case of the classical central limit theorem) that the binomial distribution is asymptotically Gaussian.
 Similarly, with some additional technicalities in controlling the error terms, the quasi-power condition can be used in conjunction with the continuity principle to establish a Gaussian limit for our statistic of interest, the longest path length.

The question remains as to how we establish the quasi-power condition \eqref{eq:quasipowcond}.  Recall that the PGF $p_n(u)$ is expressed in terms of coefficient extraction of the generating function $F(x,y,u)$.  
\be\label{eq:ratio} p_{n}(u)= \frac{[x^n y^n]F(x,y,u)}{[x^n y^n]F(x,y,1)}\ee
Asymptotic analysis of coefficients of generating functions is a central topic of Analytic Combinatorics, and an available result from the multi-variate setting (the second of the two ``high-level'' results mentioned above, see Theorem \ref{thm:PW} below) applied to the numerator and denominator (treating $u$ as a complex parameter in the numerator case) yields the desired quasi-power condition.  To elaborate, let us give some insight, following expository remarks in \cite{KLM}, on the underlying complex analytic and asymptotic methods by sketching the analysis for the denominator case of \eqref{eq:ratio} (the case of the numerator is similar but with additional technicalities due to the presence of the complex parameter $u$).

We note that the denominator of \eqref{eq:ratio} is nothing other than the poly-Bernoulli number whose asymptotic behavior, as previously obtained by the authors of the current paper together with S. Melczer in \cite{KLM}, is given by

\be\label{eq:KLM}
[x^n y^n] F(x,y,1) = B_{n,n} =  (n!)^2 \sqrt{\frac{1}{n\pi(1-\log 2)}}\left( \frac{1}{\log 2} \right) ^{2n+1} \left( 1 + O(n^{-1})  \right), \quad \text{as \, } n \rightarrow \infty. 
\ee

The starting point for proving \eqref{eq:KLM} is the (iterated) Cauchy integral formula for the coefficient (treating $x$ and $y$ as complex variables and integrating along a circular contour in each of their respective complex planes)
\begin{equation} B_{n,n} = \frac{n!n!}{(2\pi i)^2} \int_{|x|=u} \left(\int_{|y|=v} \frac{1}{e^{-x}+e^{-y}-1} \frac{dy}{y^{n+1}}\right)\frac{dx}{x^{n+1}}, \label{eq:mCIF} \end{equation}
where the radii $u,v$ are initially chosen to be small.  The first step is to deform the circular contours of integration until they pass near the singularity set, and then look for a ``contributing singularity'' about which to localize the main contribution to the integral.
Guided by the symmetry in $F(x,y,1)$ along with the fact that we are considering the diagonal direction $n=k$, we anticipate a contributing singularity to lie on the diagonal set $x=y$, and together with the defining equation for the variety of singularities $e^{-x} + e^{-y}-1 = 0$, this leads us to consider the point $(\log 2, \log 2)$.
%and it is a so-called \emph{strictly minimal point}, which means that it is the unique contributing singularity for the diagonal sequence case $n=k$ we are considering.  
Each of the circular contours can be deformed (without encountering singularities, i.e., points $(x,y)\in \C^2$ satisfying $e^{-x}+e^{-y}-1=0$) so that the pair of radii $(u,v)$ in Equation~\eqref{eq:mCIF} can be replaced by $(\log 2, \log 2 - \e)$ for any sufficiently small $\e>0$.  The circular contour of integration $|x|=\log 2$ can be replaced by a small neighborhood $\mathcal{N}$ of $\log 2$ in the circle $|x|=
\log 2$ while introducing an exponentially negligible error (this is related to the fact that the term $1/x^{n+1}$ has modulus comparatively negligibly small outside any neighborhood of the positive real direction). Furthermore, replacing $|y|=\log 2$ by $|y|=\log 2+\epsilon$  (generally referred to as ``pushing the contour past the singularity'') results in an integral which is also exponentially smaller than $B_{n,n}$ (this is related to the uniform smallness of the term $1/y^{n+1}$ along a contour of larger radius). Thus, up to an exponentially negligible error, $B_{n,n}$ is a difference of integrals
\[ \frac{n!n!}{(2\pi i)^2}\int_{x \in \mathcal{N}} \left(\int_{|y|=\log 2 -\epsilon} \frac{1}{e^{-x}+e^{-y}-1} \frac{dy}{y^{n+1}} - \int_{|y|=\log 2 +\epsilon} \frac{1}{e^{-x}+e^{-y}-1} \frac{dy}{y^{n+1}}\right)\frac{dx}{x^{n+1}}.\]
The inner difference of integrals equals the residue of the integrand at the singularity $y=-\log(1-e^{-x})$, and hence, after performing this ($x$-dependent) residue computation, the sequence of interest is asymptotically approximated by the integral
\[ \frac{n!n!}{2\pi i}\int_{x \in \mathcal{N}} \frac{1}{1-e^{-x}} \frac{dx}{x^{n+1}\left(-\log(1-e^{-x})\right)^{n+1}}, \]
whose asymptotics can be obtained using the classical saddle-point method that then yields the desired result \eqref{eq:KLM}.
This outlines the analysis for the asymptotic \eqref{eq:KLM} of the denominator of \eqref{eq:ratio}.
The numerator can be treated similarly (while viewing $u$ as a complex parameter) in order to establish the desired quasi-power condition \eqref{eq:quasipowcond}.
This concludes our sketch of conceptual ideas underlying the Gaussian limit.  The reader interested in further details can read the proofs in Section \ref{sec:AC} while consulting the authoritative texts \cite{Flajolet} by Flajolet and Sedgewick (for background and proof of Theorem \ref{thm:Flaj} on the sufficiency of the quasi-power condition) and \cite{PWM} by Pemantle, Wilson, and Melczer (for background and proof of Theorem \ref{thm:PW} providing the asymptotic used to establish the quasi-power condition).

\subsection{Asymptotic Notation}

Throughout the paper, we use the standard ``big-O'' and ``little-o'' Landau asymptotic notation. For two positive  sequences $a_n$ and $b_n$, we write $b_n=O(a_n)$ if there exists a constant $C$ such that $b_n \le C a_n$, and we write $b_n = o(a_n)$ if $b_n/a_n \rightarrow 0$ as $n \rightarrow \infty$.  
Specifically, we will use these notations to conveniently represent error terms appearing within equations, as is often done, for instance, when the classical Stirling asymptotic $n! \sim \sqrt{2\pi n} \left(\frac{n}{e}\right)^n$ is expressed equivalently as
$$n! = \sqrt{2\pi n} \left(\frac{n}{e}\right)^n\left(1+o(1) \right),$$
or in its version with sharper error estimate
$$n! = \sqrt{2\pi n} \left(\frac{n}{e}\right)^n\left(1+O(n^{-1}) \right),$$
where the expression $o(1)$ (or $O(n^{-1})$ in the case of the second equation) appears in place of a sequence $E_n$ (appropriately viewed as an error term) that satisfies $E_n = o(1)$ (or $E_n=O(n^{-1})$ in the case of the second equation).

\subsection*{Outline of the paper}

We review some preliminary results in Section \ref{sec:prelim} that serve as background and points of reference for our enumerative results. 
We prove our enumerative result Theorem \ref{thm:main} in Section \ref{sec:proofGF}, and we prove our asymptotic result Theorem \ref{thm:asymp} in Section \ref{sec:AC}. We conclude with some open problems in Section \ref{sec:concl}.

\subsection*{Acknowledgements}
The second-named author acknowledges support from the Simons Foundation (grant 712397).  This paper is based in part on results of the first-named author's Ph.D thesis \cite{JessicaThesis}.

\section{Preliminaries}\label{sec:prelim}

In this section, we review some previous results that play an especially important role in the proofs of our results.

\subsection{Acyclic orientations and lonesum matrices}
%\commE{explain at the beginning of the section the role of including proofs of the known Theorems \ref{thm:bijection} and \ref{thm:polyB} since part of your proof of the main result requires a refined study of those proofs.}

The proof of Theorem \ref{thm:main} will utilize the following bijection established in \cite{CameronPreprint} between acyclic orientations of the complete bipartite graph $K_{n,k}$ and lonesum matrices of size $n \times k$.  We include the proof here since it will clarify (and serve as a point of reference for) some of the steps in Section \ref{sec:proofGF}.

\begin{thm}\label{thm:bijection} The number of $n \times k$ lonesum matrices is equal to the number of acyclic orientations of $K_{n,k}$. 
\end{thm}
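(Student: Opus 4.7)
The plan is to exhibit an explicit bijection $\sigma \mapsto M(\sigma)$ by reading the direction of each edge off as a matrix entry, and then to reduce the acyclicity condition on $\sigma$ to the forbidden-submatrix characterization of lonesum matrices. Label the two parts of $K_{n,k}$ as $\{a_1,\ldots,a_n\}$ and $\{b_1,\ldots,b_k\}$, and for an orientation $\sigma$ set $M(\sigma)_{ij}=1$ if the edge $\{a_i,b_j\}$ is oriented from $a_i$ to $b_j$, and $M(\sigma)_{ij}=0$ otherwise. This is manifestly a bijection from the set of all orientations of $K_{n,k}$ to the set of $n\times k$ $(0,1)$-matrices, so the content of the theorem reduces to showing that $\sigma$ is acyclic if and only if $M(\sigma)$ is lonesum.

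For this, I would invoke the standard characterization (to be recalled in the preliminaries) that an $n\times k$ $(0,1)$-matrix is lonesum if and only if it contains neither of the two $2\times 2$ submatrices $\begin{pmatrix}1 & 0 \\ 0 & 1\end{pmatrix}$ and $\begin{pmatrix}0 & 1 \\ 1 & 0\end{pmatrix}$. Either forbidden pattern appearing at rows $i_1<i_2$ and columns $j_1<j_2$ of $M(\sigma)$ translates directly into a directed $4$-cycle on the four vertices $\{a_{i_1},a_{i_2},b_{j_1},b_{j_2}\}$ under $\sigma$, and conversely any directed $4$-cycle on such a set of four vertices yields one of these two forbidden patterns. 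Thus once we know that ``directed cycle'' and ``directed $4$-cycle'' are equivalent for orientations of $K_{n,k}$, the equivalence of acyclicity and the lonesum property is immediate.

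The remaining step is therefore to show that any orientation $\sigma$ containing a directed cycle contains a directed $4$-cycle. Since $K_{n,k}$ is bipartite, every directed cycle has even length $2m\geq 4$, and I would take a shortest directed cycle $C$ in $\sigma$ and argue that $m=2$. Writing $C$ as $a_{i_1}\to b_{j_1}\to a_{i_2}\to b_{j_2}\to\cdots\to a_{i_m}\to b_{j_m}\to a_{i_1}$ and assuming $m\geq 3$, consider the chord between $a_{i_1}$ and $b_{j_2}$: it exists because the underlying graph is complete bipartite, and inspecting its two possible orientations, we either close off a directed $4$-cycle $a_{i_1}\to b_{j_1}\to a_{i_2}\to b_{j_2}\to a_{i_1}$ or obtain a strictly shorter directed cycle $a_{i_1}\to b_{j_2}\to a_{i_3}\to\cdots\to a_{i_1}$ by replacing the initial length-three path with the single edge $a_{i_1}\to b_{j_2}$. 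Either conclusion contradicts the minimality of $C$, forcing $m=2$.

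I do not expect any substantive obstacle; the one point requiring care is the chord reduction, which relies essentially on the completeness of $K_{n,k}$, and where one must verify that the shortened closed walk is genuinely a simple directed cycle (avoided by starting from a shortest counterexample, so that the vertices $a_{i_1},a_{i_3},\ldots,a_{i_m},b_{j_2},\ldots,b_{j_m}$ remain distinct).
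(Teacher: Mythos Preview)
Your proposal is correct and follows essentially the same approach as the paper: the same edge-direction-to-matrix bijection, the same identification of the two forbidden $2\times 2$ minors with directed $4$-cycles, and the same chord-shortening argument to reduce an arbitrary directed cycle to a $4$-cycle. The only cosmetic difference is that you phrase the reduction via a shortest-cycle/minimal-counterexample argument, whereas the paper iterates the shortening step until a $4$-cycle is reached; these are equivalent.
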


Before proving this result, let us recall the definition of lonesum matrix along with a classification due to Ryser \cite{ryser} in terms of forbidden minors.

A \textit{lonesum matrix} is a (0,1)-matrix that can be uniquely reconstructed from its row sum and column sum vectors. In \cite{ryser}, Ryser proved that a matrix is lonesum if and only if it does not contain
$\begin{bmatrix} 1 & 0 \\ 0 & 1  \end{bmatrix} $ or 
$\begin{bmatrix} 0 & 1 \\ 1 & 0  \end{bmatrix} $ as a submatrix.  One direction of this equivalence is easy to see, namely, if a $(0,1)$ matrix were to contain one of these so-called forbidden minors as a submatrix, it could be replaced by the other forbidden minor without changing any row or column sums, thus violating the uniqueness of the reconstruction. 
%% \begin{smallmatrix} is an option. 

Here is an example of a $ 4 \times 4$ lonesum matrix: 
$$\begin{bmatrix}
     1 & 0 & 1 & 0 \\
     1 & 1 & 1 & 1 \\
     0 & 0 & 0 & 0 \\
     1 & 0 & 0 & 0 \\
\end{bmatrix} $$ 
It is easy to verify that this matrix can be uniquely reconstructed given its row sum vector $\begin{bmatrix} 2 \\ 4 \\ 0 \\ 1\\ \end{bmatrix}$ along with its column sum vector $\begin{bmatrix} 3 & 1 & 2 & 1 \end{bmatrix}$.

\begin{proof}[Proof of Theorem \ref{thm:bijection}] Begin by numbering the vertices in the bipartite blocks from $1$ to $n$ in block $A$, and $1$ to $k$ in block $B$. Given an orientation on the graph, we can describe it using a matrix whose $(i,j)$ entry is 1 if the edge from vertex $i$ of $A$ to vertex $j$ of $B$ goes in the direction from $A$ to $B$, and $0$ if the edge is directed from $B$ to $A$ (see Figure \ref{fig:bijection}). The two forbidden submatrices in the classification for lonesum matrices, $\begin{bmatrix} 1 & 0 \\ 0 & 1  \end{bmatrix} $ and 
$\begin{bmatrix} 0 & 1 \\ 1 & 0  \end{bmatrix} $, will then correspond to directed 4-cycles in the graph. Any acyclic orientation of $K_{n,k}$ would result in a matrix which necessarily avoids the forbidden minors and thus must be a lonesum matrix. 

For the converse, we claim that if an orientation of a complete bipartite graph contains no directed 4-cycles, then it contains no directed cycles at all. To see this, suppose that there exists an orientation of a complete bipartite graph in which there are no directed 4-cycles, but there is a longer directed cycle, say $(a_1, b_1, a_2, b_2, \dots , a_k, b_k, a_1)$. Then the edge between $a_1$ and $b_2$ must be oriented towards $b_2$, otherwise we would have a 4-cycle, namely $(a_1, b_1, a_2, b_2, a_1)$. But then we can use this edge from $a_1$ to $b_2$ to obtain a shorter directed cycle $(a_1, b_2, a_3, b_3, \dots , a_k, b_k, a_1)$. Similarly, now we see that the edge between $a_1$ and $b_3$ must also be oriented towards $b_3$ since there are no 4-cycles, and thus we can again obtain a shorter directed cycle $(a_1, b_3, \dots , a_k, b_k, a_1)$. We can continue this process and eventually must arrive at a 4-cycle, which is a contradiction.
\end{proof}

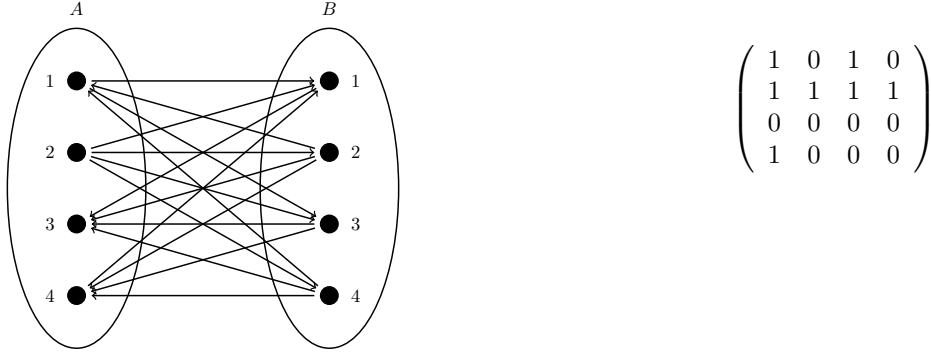
\begin{figure}
\centering
\begin{multicols}{2}
\scalebox{.7}{
\begin{tikzpicture}[scale=1.2,thick,
  every node/.style={draw,circle},
  fsnode/.style={fill=black},
  ssnode/.style={fill=black},
  every fit/.style={ellipse,draw,inner sep=-2pt,text width=2cm},
  ->,shorten >= 3pt,shorten <= 3pt]

% the vertices of U
\begin{scope}[start chain=going below,node distance=10mm]
\foreach \i in {1,2,...,4}
  \node[fsnode,on chain] (f\i) [label=left: \i] {};
\end{scope}

% the vertices of V
\begin{scope}[xshift=4cm,start chain=going below,node distance=10mm]
\foreach \i in {1,2,...,4}
  \node[ssnode,on chain] (s\i) [label=right: \i] {};
\end{scope}

% the set U
\node [black,fit=(f1) (f4),label=above:$A$] {};
% the set V
\node [black,fit=(s1) (s4),label=above:$B$] {};

% the edges
\draw (f1) -- (s1);
\draw (s2) -- (f1);
\draw (f1) -- (s3);
\draw (s4) -- (f1);
\draw (f2) -- (s1);
\draw (f2) -- (s2);
\draw (f2) -- (s3);
\draw (f2) -- (s4);
\draw (s1) -- (f3);
\draw (s2) -- (f3);
\draw (s3) -- (f3);
\draw (s4) -- (f3);
\draw (f4) -- (s1);
\draw (s2) -- (f4);
\draw (s3) -- (f4);
\draw (s4) -- (f4);

\end{tikzpicture}}  \\

$$\left( \begin{array}{cccc}
     1 & 0 & 1 & 0 \\
     1 & 1 & 1 & 1 \\
     0 & 0 & 0 & 0 \\
     1 & 0 & 0 & 0 \\
\end{array} \right)  $$

 \end{multicols}

 \caption{An acyclic orientation of the complete bipartite graph $K_{4,4}$, and its corresponding lonesum matrix. The $(i,j)$ entry of the matrix is $1$ if the edge in the graph is oriented from vertex $i$ in $A$ to vertex $j$ in $B$, and $0$ if oriented the opposite way. }\label{fig:bijection}
\end{figure}
% Should I make this 5x4 to emphasize that it doesn't need to be a square matrix? 

We note in passing that the matrix representation of the acyclic orientation described in Theorem \ref{thm:bijection} is essentially the adjacency matrix.  To be precise, it comprises one block in the graph's adjacency matrix written in block form $\begin{bmatrix} 0 & M \\ M^{tb} & 0  \end{bmatrix}$, where each $0$ represents a zero matrix of appropriate size, and $M^{tb}$ is the matrix obtained from $M$ via taking the transpose and a bit-flip.

\subsection{Enumeration of Lonesum Matrices}
%Suppose you are given the row sum vector $\left[ \begin{smallmatrix} 2 \\ 4 \\ 0 \\ 1 \end{smallmatrix} \right]$

In this section, we review the proof of the following counting formula \cite{BrewbakerThesis} for the number of lonesum matrices of size $n \times k$.  Here we use the standard notation 
$\stirling{n}{k}$ (read ``$n$ brace $k$") for the \textit{Stirling numbers of the second kind} that count the number of ways to partition an $n$ element set into $k$ nonempty subsets \cite{stanley}. 

\begin{thm}\label{thm:polyB} The number of lonesum matrices of size $n \times k$ is given by the poly-Bernoulli number $B_{n,k}$. 
\be\label{eq:Bnk} B_{n,k}  = \sum_{m\geq 0} (m!)^2 \stirling{n+1}{m+1} \stirling{k+1}{m+1}.
\ee
\end{thm}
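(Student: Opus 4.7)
The plan is to use Ryser's forbidden-submatrix characterization of lonesum matrices (stated in the previous subsection) together with a structural description of such matrices in terms of a pair of ordered set partitions, and then match that description with the right-hand side of~\eqref{eq:Bnk} term by term.

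First I would analyze the row structure of a lonesum matrix. Because the forbidden patterns $\left[\begin{smallmatrix}1 & 0\\ 0 & 1\end{smallmatrix}\right]$ and $\left[\begin{smallmatrix}0 & 1\\ 1 & 0\end{smallmatrix}\right]$ are exactly the two $2\times 2$ $(0,1)$-matrices whose rows are incomparable in the coordinatewise order on $\{0,1\}^k$, Ryser's theorem implies that the set of distinct rows of an $n\times k$ lonesum matrix is \emph{totally ordered} coordinatewise. In particular, the distinct nonzero rows form a chain $r_1<r_2<\cdots<r_m$, and two rows of the matrix with the same row sum must be identical. By the same argument applied to the transpose, the distinct nonzero columns also form a chain of some length $m'$. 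A short check shows $m'=m$: the supports $S_i=\operatorname{supp}(r_i)$ form a strictly increasing chain $\varnothing\subsetneq S_1\subsetneq \cdots\subsetneq S_m\subseteq\{1,\dots,k\}$, and for each $i\in\{1,\dots,m\}$ the set $C_i:=S_i\setminus S_{i-1}$ (with $S_0=\varnothing$) is nonempty and gives exactly the columns whose first ``1'' occurs in row-type $r_i$; together with the possibly empty block $C_0:=\{1,\dots,k\}\setminus S_m$ of all-zero columns, these partition $\{1,\dots,k\}$.

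Next I would convert this structural description into a count. Fix $m\ge 0$. The row data is an assignment of a type in $\{0,r_1,\dots,r_m\}$ to each of the $n$ rows, i.e., an ordered partition of $\{1,\dots,n\}$ into $m$ nonempty labeled blocks $R_1,\dots,R_m$ (the preimages of $r_1,\dots,r_m$, listed in their chain order) together with one possibly empty block $R_0$ of zero-rows. The standard trick of adjoining a phantom element $0$ to $\{1,\dots,n\}$, placing it in $R_0$, and counting unordered partitions of $\{0,1,\dots,n\}$ into $m+1$ blocks after a linear ordering of the $m$ non-phantom blocks, gives exactly $m!\,\stirling{n+1}{m+1}$ choices. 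By the mirror argument on columns (using the block $C_0$ of all-zero columns as the phantom-containing block) one obtains $m!\,\stirling{k+1}{m+1}$ choices for the column data. Crucially, a row assignment and a column assignment together determine the matrix uniquely: entry $(i,j)$ is $1$ iff the column class index of $j$ is at most the row class index of $i$ in the chain.

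The final step is to verify that every pair (row-ordered partition, column-ordered partition) with the same number $m$ of nonempty ``type blocks'' actually arises from a lonesum matrix. This is a direct construction: the rules above produce a $(0,1)$-matrix whose row types are a chain by design, so no $2\times 2$ forbidden submatrix can appear. Summing $(m!)^2\,\stirling{n+1}{m+1}\stirling{k+1}{m+1}$ over $m\ge 0$ then yields \eqref{eq:Bnk}. The step I expect to require the most care is justifying that the distinct rows truly form a \emph{chain} (not merely a poset) under coordinatewise order, and the parallel claim that the number $m$ of distinct nonzero row types equals the number of distinct nonzero column types; both follow from Ryser's characterization, but stating them cleanly is what makes the bijection with pairs of ordered partitions work without overcounting or undercounting the ``zero'' block.
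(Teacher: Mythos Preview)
Your proposal is correct and follows essentially the same approach as the paper: both arguments identify that the distinct rows (and columns) of a lonesum matrix form a chain, partition the rows and columns into equal-sum classes, and handle the possibly-empty zero class by adjoining a phantom element (the paper phrases this as literally appending an all-zero row and column to obtain an $(n+1)\times(k+1)$ matrix). The only cosmetic difference is that the paper derives the chain structure via a sorting argument culminating in a ``staircase shape'' lemma, whereas you deduce it directly from the observation that the forbidden minors are precisely the $2\times 2$ matrices with coordinatewise-incomparable rows.
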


The proof we present below is an elaboration on the proof sketched in \cite{benyi2}. First we establish some lemmas.

%%% START HERE %%%

\begin{lemma}\label{lem1} Permuting rows (or columns) of a matrix does not change membership in the class of lonesum matrices. \end{lemma}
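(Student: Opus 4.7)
The plan is to prove the lemma via Ryser's forbidden-minor characterization, which was recalled immediately before the lemma statement: a $(0,1)$-matrix is lonesum if and only if it contains neither $\begin{bmatrix} 1 & 0 \\ 0 & 1 \end{bmatrix}$ nor $\begin{bmatrix} 0 & 1 \\ 1 & 0 \end{bmatrix}$ as a $2\times 2$ submatrix. Since this characterization refers only to the (unordered) collection of $2\times 2$ submatrices, it should be invariant under reordering rows and columns.

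First I would record the small observation that the two forbidden matrices form a single orbit under row (resp.\ column) swaps: swapping the two rows of $\begin{bmatrix} 1 & 0 \\ 0 & 1 \end{bmatrix}$ yields $\begin{bmatrix} 0 & 1 \\ 1 & 0 \end{bmatrix}$, and likewise swapping the two columns does the same. Consequently the \emph{set} of forbidden minors is closed under arbitrary permutations of its rows and of its columns.

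Next, given a row permutation $\sigma$ acting on an $n\times k$ matrix $M$ to produce $M' = \sigma\cdot M$, I would note that each $2\times 2$ submatrix of $M'$ indexed by rows $\{i_1,i_2\}$ and columns $\{j_1,j_2\}$ coincides, after possibly swapping its two rows, with the $2\times 2$ submatrix of $M$ indexed by rows $\{\sigma^{-1}(i_1),\sigma^{-1}(i_2)\}$ and columns $\{j_1,j_2\}$. By the preceding observation, such a submatrix lies in the forbidden set if and only if the corresponding submatrix of $M$ does. Therefore $M'$ contains a forbidden submatrix if and only if $M$ does, and Ryser's criterion yields that $M'$ is lonesum if and only if $M$ is lonesum. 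The argument for a column permutation is identical, with the roles of rows and columns interchanged.

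I do not anticipate any substantive obstacle here; once Ryser's characterization is invoked, the lemma reduces to the combinatorially transparent fact that the forbidden set is stable under row/column swaps. The only care needed is to state the bijective correspondence between $2\times 2$ submatrices of $M$ and of $M'$ cleanly enough that the row-swap ambiguity is absorbed by the invariance of the forbidden set.
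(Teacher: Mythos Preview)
Your proposal is correct and follows essentially the same approach as the paper: both invoke Ryser's forbidden-minor characterization and use the observation that the pair of forbidden $2\times 2$ matrices is closed under row and column swaps, so that a permuted matrix contains a forbidden minor if and only if the original does. Your version spells out the bijection between $2\times 2$ submatrices of $M$ and of $M'$ a bit more explicitly, but the argument is the same.
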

\begin{proof} Suppose $M$ is not a lonesum matrix, and thus contains at least one of the forbidden minors as a submatrix. Then by permuting the rows (or columns) of $M$, either the same forbidden minor will still appear as a submatrix, or the other forbidden minor will now appear, thus the resulting matrix is also not a lonesum matrix. Similarly, if $M$ is a lonesum matrix and avoids the forbidden minors, then any permuting of rows (or columns) must also avoid the forbidden minors. 

\end{proof}

\begin{lemma}\label{lem2} If $M$ is a lonesum matrix, and the columns of $M$ are permuted such that they are arranged in decreasing order by column sum, then every row has the form $11...100...0$. \end{lemma}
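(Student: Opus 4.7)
The plan is to prove the contrapositive using Ryser's forbidden minor criterion from the preceding discussion: if after sorting the columns in decreasing order of column sum some row has a $0$ appearing strictly to the left of a $1$, I will exhibit a $\begin{bmatrix} 0 & 1 \\ 1 & 0 \end{bmatrix}$ submatrix, contradicting the lonesum property (which is preserved under column permutation by Lemma~\ref{lem1}, so the sorted matrix remains lonesum and Ryser's criterion applies to it directly).

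First I would suppose, for contradiction, that row $r$ of the sorted matrix has a $0$ in column $j$ and a $1$ in column $j'$ with $j < j'$. Writing $S_j$ and $S_{j'}$ for the sets of row indices where columns $j$ and $j'$ respectively contain a $1$, the sorting convention gives $|S_j| \geq |S_{j'}|$, while the assumption on row $r$ says $r \in S_{j'} \setminus S_j$. A one-line counting argument -- writing $|S_j| = |S_j \cap S_{j'}| + |S_j \setminus S_{j'}|$ and similarly for $|S_{j'}|$, then subtracting -- yields $|S_j \setminus S_{j'}| \geq |S_{j'} \setminus S_j| \geq 1$. Hence there exists a row $r'$ with entry $1$ in column $j$ and entry $0$ in column $j'$, and the $2 \times 2$ submatrix on rows $\{r, r'\}$ and columns $\{j, j'\}$ is exactly the forbidden minor $\begin{bmatrix} 0 & 1 \\ 1 & 0 \end{bmatrix}$, completing the contradiction.

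The only real obstacle is guaranteeing that such a row $r'$ actually exists, and this is precisely where the decreasing column-sum ordering is essential -- without the inequality $|S_j| \geq |S_{j'}|$, the set $S_j \setminus S_{j'}$ could be empty even though $r \in S_{j'} \setminus S_j$. The other forbidden minor $\begin{bmatrix} 1 & 0 \\ 0 & 1 \end{bmatrix}$ plays no explicit role in the argument, but the sorted orientation of the columns effectively rules out that pattern as well. Once this lemma is established, every row of the sorted matrix necessarily has the prefix form $1 \cdots 1 0 \cdots 0$, as claimed, which is the structural input needed for the enumeration to follow in the proof of Theorem~\ref{thm:polyB}.
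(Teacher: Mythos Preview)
Your proof is correct and follows essentially the same approach as the paper: both argue by contradiction using Ryser's forbidden-minor criterion together with the column-sum ordering. The only organizational difference is that the paper first uses the absence of forbidden minors to force every row with a $1$ in the left column to also have a $1$ in the right column, and then contradicts the column ordering, whereas you use the column ordering first (via the clean counting inequality $|S_j \setminus S_{j'}| \geq |S_{j'} \setminus S_j| \geq 1$) to locate the companion row $r'$ and then exhibit the forbidden minor directly; your route is arguably tidier and avoids the paper's implicit reduction to adjacent columns.
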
 
\begin{proof} Suppose that the columns of $M$ are arranged in decreasing order by column sum and there is a row that is not in this form, say row $j$. Then in row $j$ there is a 1 that appears to the right of a 0. Suppose this 1 is in column $k$. Since the columns are arranged in decreasing order by sum, there must be another row, say $\ell$, that contains a 1 in column $k-1$. Then in order to avoid the forbidden minors, the $(\ell, k)$ entry must be a 1. But then column $k$ has a higher sum than column $k-1$, thus a contradiction of the columns being in decreasing order.

 \end{proof}

\begin{lemma}\label{lem3} If $M$ is a lonesum matrix, and both the rows and the columns are permuted such that they are arranged in decreasing order by row sum and column sum, the resulting matrix has a staircase shape (i.e., the collection of 1's in the matrix forms a Young diagram). \end{lemma}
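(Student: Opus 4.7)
The plan is to reduce the claim to a direct application of the two preceding lemmas, exploiting the fact that row and column permutations act independently on column sums and row sums, respectively.

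First I would apply Lemma \ref{lem1} to observe that both the row permutation and the column permutation leave $M$ in the lonesum class, so that Lemma \ref{lem2} remains applicable throughout the argument. Next, I would carry out the permutations in sequence rather than simultaneously. Begin by permuting the columns so that their column sums are weakly decreasing from left to right; by Lemma \ref{lem2}, every row of the resulting matrix has the left-justified form $1 \cdots 1 \, 0 \cdots 0$. Let $r_i$ denote the number of $1$s in row $i$, so that the $1$s of row $i$ occupy exactly the columns $1, 2, \dots, r_i$.

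Now permute the rows to be in weakly decreasing order of row sum. The key observation is that a row permutation does not alter any column sum, so the column sums remain weakly decreasing after this second step, and of course each row retains its left-justified form since its entries are merely relocated as a block. After both permutations the matrix thus satisfies $r_1 \geq r_2 \geq \cdots \geq r_n$, with the $1$s of row $i$ forming the interval of columns $\{1, \dots, r_i\}$. This is precisely the indicator matrix of the Young diagram with row lengths $(r_1, r_2, \dots, r_n)$, which is the staircase shape claimed.

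I do not expect a serious obstacle here; the only small point worth verifying carefully is that reordering rows does not destroy the conclusion of Lemma \ref{lem2} already established for the column-sorted matrix, which is immediate because column sums are invariant under row permutation and each row is moved as a whole. Consequently the two sorting steps can indeed be performed independently, and the staircase shape follows.
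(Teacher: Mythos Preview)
Your proposal is correct and follows essentially the same approach as the paper, which simply says ``Apply the previous lemma to both the rows and columns of $M$.'' You have unpacked this one-line argument into explicit detail, carefully verifying that the two sorting steps can be done in sequence without interfering with each other; the content is the same.
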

\begin{proof} Apply the previous lemma to both the rows and columns of M.
\end{proof}

%By the previous lemma, the rows have the form of a string of 1s followed by a string of zeros. By a similar argument, arranging the rows in decreasing order would lead to columns in the same form, with a string of 1s on top and a string of 0s below. These together result in a staircase shape matrix. 

\begin{proof}[Proof of Theorem \ref{thm:polyB}]
Let $M$ be an arbitrary lonesum matrix of size $n \times k$. Add a new row and a new column consisting of all 0s to obtain lonesum matrix $M'$ of size $(n+1) \times (k+1)$. This now guarantees that there is at least one row and at least one column consisting of all 0s. Then partition the rows and the columns of $M'$ according to the sum of their entries, so that two rows (or columns) are in the same class if they have the same row (or column) sum. The number of row classes and column classes must be the same by Lemma \ref{lem3}, and we call this value $m+1$, where $m$ denotes the number of nonzero classes. The row classes can be partitioned in $\stirling{n+1}{m+1}$ ways, and the column classes can be partitioned in $\stirling{k+1}{m+1}$ ways. Then order the nonzero row classes in $m!$ possible ways, and the nonzero column classes in $m!$ ways. Then using the two partitions and two orders, we can decode $M$. Summing over all of the possible values of $m$, we arrive at the result stated in \eqref{eq:Bnk}.
\end{proof}

\subsection{Bivariate generating function for the poly-Bernoulli numbers}\label{sec:GF}

As mentioned previously, the trivariate generating function in Theorem \ref{thm:main} is closely related to the bivariate (exponential in each variable) generating function for the poly-Bernoulli numbers $B_{n,k}$, and Theorem \ref{thm:main} can be viewed as a refinement of the following well-known result.

\begin{thm}
The generating function
\be\label{eq:Bxy}
B(x,y) = \sum_{k=0}^{\infty} \sum_{n=0}^{\infty} B_{n,k} \frac{x^n}{n!} \frac{y^k}{k!}
\ee
for the poly-Bernoulli numbers satisfies
\begin{equation}\label{BGF} B(x,y) = \frac{e^{x+y}}{e^x+e^y-e^{x+y}}.
 \end{equation}
\end{thm}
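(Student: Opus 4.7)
The plan is to start from the explicit formula for the poly-Bernoulli numbers given in Theorem \ref{thm:polyB},
\[
B_{n,k} = \sum_{m \geq 0} (m!)^2 \stirling{n+1}{m+1}\stirling{k+1}{m+1},
\]
substitute it into the double series \eqref{eq:Bxy}, and exchange the order of summation so that the sum over $m$ sits outside a product of two one-variable EGFs in the Stirling numbers. The resulting expression will collapse into a geometric series that we can sum in closed form.

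The main computational step is to identify the one-variable EGF
\[
S_m(x) \coloneqq \sum_{n \geq 0} \stirling{n+1}{m+1}\frac{x^n}{n!}.
\]
I would obtain this from the classical identity $\sum_{p \geq m+1} \stirling{p}{m+1}\frac{x^p}{p!} = \frac{(e^x-1)^{m+1}}{(m+1)!}$ by reindexing $p = n+1$ and differentiating in $x$, which gives
\[
S_m(x) = \frac{d}{dx}\left[\frac{(e^x-1)^{m+1}}{(m+1)!}\right] = \frac{e^x (e^x-1)^m}{m!}.
\]

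Plugging this into the reorganized series yields
\[
B(x,y) = \sum_{m \geq 0}(m!)^2\, S_m(x)\, S_m(y) = e^{x+y}\sum_{m \geq 0}\bigl[(e^x-1)(e^y-1)\bigr]^m,
\]
where the two factorials of $m!$ in the numerator cancel against the two in $S_m(x)S_m(y)$. Summing the geometric series (valid formally as a power series in $x,y$ since $(e^x-1)(e^y-1)$ has no constant term) gives
\[
B(x,y) = \frac{e^{x+y}}{1-(e^x-1)(e^y-1)},
\]
and expanding the denominator as $1-(e^x-1)(e^y-1) = e^x+e^y-e^{x+y}$ produces the stated form \eqref{BGF}.

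There is no real obstacle here; the only point requiring a little care is the derivation of the EGF $S_m(x)$ with the shifted index, since the standard formula $\sum_{p \geq m}\stirling{p}{m}x^p/p! = (e^x-1)^m/m!$ concerns $\stirling{p}{m}$ rather than $\stirling{n+1}{m+1}$. The reindexing-and-differentiation trick handles this cleanly, and after that step every manipulation is a formal power series identity.
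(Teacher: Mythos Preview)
Your proposal is correct and follows essentially the same route as the paper: substitute the formula $B_{n,k}=\sum_{m\ge0}(m!)^2\stirling{n+1}{m+1}\stirling{k+1}{m+1}$, swap the order of summation, use the shifted Stirling EGF obtained by differentiating $(e^x-1)^{m+1}/(m+1)!$, and sum the resulting geometric series. The only cosmetic difference is that the paper writes the denominator directly as $e^x+e^y-e^{x+y}$ after recognizing $(e^x-1)(e^y-1)=e^{x+y}-e^x-e^y+1$, which you also note at the end.
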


Let us review the proof of the analytic expression \eqref{BGF} for $B(x,y)$ as it will serve as a point of reference for the proof of Theorem \ref{thm:main}.

First, recall the generating function for the Stirling numbers of the second kind $\stirling{n}{k}$ for fixed $k$:
\begin{equation}\label{eq:stirlinggf2}
\sum_{n=k}^{\infty} \stirling{n}{k} \frac{x^n}{n!} = \frac{(e^x-1)^k}{k!}.
\end{equation}
Shifting the index,
$$\sum_{n=k}^{\infty} \stirling{n+1}{k+1} \frac{x^{n+1}}{(n+1)!} = \frac{(e^x-1)^{k+1}}{(k+1)!},$$ 
and differentiating, we obtain
\begin{equation}\label{eq:stirlinggf} 
\sum_{n=k}^{\infty} \stirling{n+1}{k+1} \frac{x^{n}}{n!} = \frac{e^x(e^x-1)^k}{k!}.
\end{equation}

In order to apply this, first substitute the formula \eqref{eq:formula} for $B_{n,k}$. Then change the order of summation.  
\begin{align*} B(x, y) &=\sum_{k=0}^{\infty} \sum_{n=0}^{\infty} B_{n,k} \frac{x^n}{n!} \frac{y^k}{k!} \\
 &= \sum_{k=0}^{\infty} \sum_{n=0}^{\infty} \left( \sum_{m=0}^{\infty} (m!)^2 \stirling{n+1}{m+1} \stirling{k+1}{m+1} \right) \frac{x^n}{n!} \frac{y^k}{k!} \\ 
&=  \sum_{m=0}^{\infty} (m!)^2 \sum_{n=0}^{\infty} \stirling{n+1}{m+1}\frac{x^n}{n!} \sum_{k=0}^{\infty} \stirling{k+1}{m+1}   \frac{y^k}{k!}  
\end{align*}

Substituting the generating function in Equation \eqref{eq:stirlinggf}, and simplifying, we obtain
\begin{align*} B(x,y) &= \sum_{m=0}^{\infty} (m!)^2 \sum_{n=0}^{\infty} \stirling{n+1}{m+1}\frac{x^n}{n!} \sum_{k=0}^{\infty} \stirling{k+1}{m+1}   \frac{y^k}{k!}  \\
&=  \sum_{m=0}^{\infty} (m!)^2 \left( \frac{e^x(e^x-1)^m}{m!} \right) \left( \frac{e^y(e^y-1)^m}{m!} \right) \\
&= \sum_{m=0}^{\infty} (e^{x+y})(e^{x+y}-e^x-e^y+1)^m.
\end{align*}
Recognizing this as a geometric series, we obtain the desired analytic expression: 
\begin{equation} B(x,y) = \frac{e^{x+y}}{e^x+e^y-e^{x+y}}.
 \end{equation}

We again emphasize what this means in terms of the coefficient extraction: 
$$[x^ny^k] \left( \frac{e^{x+y}}{e^x+e^y-e^{x+y}} \right) = B_{n,k},$$
i.e., the coefficient of $x^ny^k$ in the expansion of $\frac{e^{x+y}}{e^x+e^y-e^{x+y}}$ is exactly $B_{n,k}$. 
This generating function and its derivation will be revisited below in Section \ref{sec:proofGF}.

\section{Proof of Theorem \ref{thm:main} (enumeration of acyclic orientations refined by longest path length)}\label{sec:proofGF}
%Explain the goal, and how this helps towards the goal. 

First, let us use the lonesum matrix associated to a given acyclic orientation in order to identify its longest path length.

Suppose we have an acyclic orientation of $K_{n,k}$, with parts $A$ and $B$ of sizes $n$ and $k$, respectively. We construct $M$, the  $n \times k$ matrix representation of this graph, as described in the proof of Theorem \ref{thm:bijection}, so that a 1 in the $(i,j)$ position represents an edge directed from vertiex $i$ in part $A$ to vertex $j$ in part $B$, and a 0 in the $(i,j)$ position represents an edge directed from vertex $j$ in part $B$ to vertex $i$ in part $A$. We will reorganize the rows and columns of $M$ to form a new matrix $M'$, with the rows and columns arranged in decreasing order according to their sums. Note that this rearranging of the matrix is equivalent to relabeling the vertices in the graph and thus preserves the length of the longest path.  This new matrix $M'$ will now have the staircase shape as described in the proof of Lemma \ref{lem3}, with $m$ being the number of nonzero row or column classes as in the proof of Theorem \ref{thm:polyB}. The length of the longest path, which we will denote $\ell$, will depend on this value of $m$ as well as the appearance of all-zero rows/columns.

Note that a path in the bipartite graph corresponds to a sequence of entries in our matrix that alternates between $0$s and $1$s with consecutive entries alternately sharing a common row or column (so that the corresponding consecutive edges are incident on a common vertex). We locate a longest path by starting in the top right corner of the reorganized matrix $M'$. 

First consider the case in which $M$, and so also $M'$, contains no row or column consisting entirely of 0s.  Select the top right entry of $M'$, which is necessarily a $1$ in this case, as the first entry in the sequence. This entry represents an edge in the associated (relabelled) graph directed from vertex $1$ in $A$ to vertex $k$ in $B$. Next we choose the first $0$ below this $1$ in the same column in $M'$, which represents an edge from $B$ to $A$ that starts at vertex $k$ in $B$. Continue moving down the ``staircase" shape and alternate between choosing the first $1$ to the left, from the same row as the previous entry, and then the first $0$ below, from the same column as the previous entry. We will stop when we have reached a 1 that does not have a $0$ below it, or we have reached the bottom row of the matrix. In this case we will have chosen $2m-1$ entries from the matrix, corresponding to a path of length $2m-1$.  We verify that this produces a path of maximal length, as well as address the cases with all-zero rows/columns in Lemma \ref{lemma:ell} below.

We will give an example to illustrate this process. Consider the following matrix as $M'$, that has already been reorganized with the rows and colmns in decreasing order by sum. We have no row or column consisting of only 0s in this matrix. The algorithm described above leads to the following sequence of entries (circled below). 
$$\left[ \begin{array}{ccccc}
     1 & 1 & 1 & 1 & \circled{1} \\
     1 & 1 & 1 & 1 & 0 \\
     1 & 1 & 1 & 1 & 0 \\
     1 & 1 & 0 & 0 & 0 \\
     1 & 0 & 0 & 0 & 0 \\
     1 & 0 & 0 & 0 & 0 \\
\end{array} \right]  \rightarrow 
\left[ \begin{array}{ccccc}
      1 & 1 & 1 & 1 & \circled{1} \\
     1 & 1 & 1 & 1 & \circled{0} \\
     1 & 1 & 1 & 1 & 0 \\
     1 & 1 & 0 & 0 & 0 \\
     1 & 0 & 0 & 0 & 0 \\
     1 & 0 & 0 & 0 & 0 \\
\end{array} \right]  \rightarrow \quad \cdots \quad \rightarrow
\left[ \begin{array}{ccccc}
      1 & 1 & 1 & 1 & \circled{1} \\
     1 & 1 & 1 & \circled{1} & \circled{0} \\
     1 & 1 & 1 & 1 & 0 \\
     1 & \circled{1} & 0 & \circled{0} & 0 \\
     \circled{1} & \circled{0} & 0 & 0 & 0 \\
     1 & 0 & 0 & 0 & 0 \\
\end{array} \right] $$

%\commJ{Not sure if I want to keep this... }
In this example, as described above, we have no row or column consisting of all 0s in the matrix $M'$. The value of $m$, which is the number of row (or column) classes, corresponds to the number of ``steps" in the staircase shape, in this case $m=4$. The length of the longest path for the graph corresponding to this matrix, which is equal to the number of entries we have chosen from the matrix, is $2m-1=7$. 

This brings us to state the following lemma. 

\begin{lemma}\label{lemma:ell}
Given an acyclic orientation on a complete bipartite graph, let $m$ denote the number of nonzero row/column classes in the associated lonesum matrix $M$.
Then the length $\ell$ of the longest path satisfies
$$\ell = \left\{ \begin{array}{cc}
     2m-1 & \text{if M contains no row or column of all 0s} \\
     2m & \text{if M contains a row or column of all 0s, but not both} \\
     2m+1 & \text{if M contains both a row and column of all 0s} 
\end{array} \right.$$
\end{lemma}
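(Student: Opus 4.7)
The plan is to exploit the staircase structure provided by Lemma \ref{lem3}: after permuting rows and columns (which corresponds to relabeling the vertices and therefore preserves the length of the longest directed path), we may assume $M$ itself is already in staircase form. In this form the $1$s occupy exactly the positions $(R_i, C_j)$ with $i + j \le m+1$, where $R_1, \ldots, R_m$ are the nonzero row classes (indexed by decreasing row sum) and $C_1, \ldots, C_m$ are the nonzero column classes (indexed by decreasing column sum); the possibly-present all-zero row class $R_0$ and all-zero column class $C_0$ sit outside this staircase.

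The key combinatorial input will be the following monotonicity claim: along any directed path in the orientation, the $A$-vertices are visited in strictly increasing order of row-class index, and the $B$-vertices are visited in strictly decreasing order of column-class index. To verify this for two consecutive $A$-vertices $a, a'$ on the path joined by a vertex $b \in B$, observe that the edges $a \to b$ and $b \to a'$ force $M(a,b) = 1$ and $M(a',b) = 0$, which by the staircase property forces the row class of $a$ to have strictly larger row sum than that of $a'$. Transitivity extends this to all pairs of $A$-vertices on the path, and the analogous claim for $B$-vertices follows by swapping the roles of rows and columns. Consequently the distinct $A$-vertices of any directed path lie in pairwise distinct row classes, so their number is at most $m$ if $M$ contains no all-zero row and at most $m+1$ otherwise; analogously for $B$-vertices and column classes. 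Summing and subtracting one for the edge count gives the claimed upper bound $\ell \le 2m-1$, $2m$, or $2m+1$ in the three cases of the lemma.

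For the matching lower bound, the explicit zigzag construction described in the paragraph preceding the lemma statement will suffice: starting at the top-right $1$-entry at position $(R_1, C_m)$, alternately step down to the $0$ at $(R_2, C_m)$ (same column), then left to the $1$ at $(R_2, C_{m-1})$ (same row), and continue zigzagging down the staircase to terminate at the $1$-entry at $(R_m, C_1)$, producing a sequence of $2m-1$ alternating entries and hence a directed path of length $2m-1$. If $M$ contains an all-zero row, we append the extra $0$-step to some entry in $(R_0, C_1)$; if $M$ contains an all-zero column, we prepend a $0$-step from some entry in $(R_1, C_0)$ at the start; combining the two extensions when both degenerate classes are present produces a path of length $2m+1$, matching the upper bound in every case.

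The main obstacle will be making the monotonicity claim airtight, particularly verifying that a single $(1,0)$ pair occurring in a shared column of the staircase matrix indeed forces the strict inequality of row-class indices (rather than merely a weak one); this will follow directly from the Young-diagram shape of Lemma \ref{lem3}. The other place requiring care is the bookkeeping of the degenerate classes $R_0$ and $C_0$, so that they contribute exactly $+1$ each in the vertex count — neither more nor less — in each of the three cases, and so that the proposed extensions of the zigzag path remain directed paths (i.e., that they indeed attach at the correct endpoint with the correct edge orientation).
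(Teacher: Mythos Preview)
Your proposal is correct, and the lower-bound construction (the zigzag through the staircase, with optional prepended/appended $0$-steps for the degenerate classes $C_0$ and $R_0$) is essentially identical to the paper's.

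The upper-bound argument, however, takes a genuinely different route from the paper's. The paper does not use the staircase structure for the upper bound at all: it argues directly on the acyclic orientation that if two $A$-vertices $a_1,a_2$ from the same class both lie on a directed path, then using the fact that $a_1$ and $a_2$ have identical edge orientations toward $B$ one can reroute a segment of the path into a directed cycle, contradicting acyclicity. Your argument instead stays entirely on the matrix side: you use the Young-diagram shape from Lemma~\ref{lem3} to deduce that a $(1,0)$ pair in a common column forces a strict drop in row sum, hence a strict increase in row-class index, and therefore the $A$-vertices of any path occupy pairwise distinct row classes (and symmetrically for $B$-vertices). Your version yields slightly more information (the actual monotone order in which classes are visited), while the paper's cycle argument is self-contained in the graph and avoids invoking the staircase reduction a second time. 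Both are short and correct; your bookkeeping for $R_0$ and $C_0$ as sink/source classes is the right way to close the count in the degenerate cases.
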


\begin{proof}
The case in which $M$ contains no row or column of all 0s was already described above. 

If we are in the case when $M$ contains a row or column of all $0$s, but not both, the above process produces a path of length $\ell = 2m$. Indeed, a column of all $0$s means the rightmost column of $M'$ is all $0$s.  After selecting the top right entry, a $0$, select the nearest $1$ to the left of it, and from there follow the rest of the procedure as described above.  This gives one additional entry at the beginning of the original sequence.  Similarly, a row of all $0$s leads to appending an additional entry at the end of the sequence. 

Similarly, the case where $M$ contains both a row and column of all $0$s increases the length by $2$ resulting in $\ell = 2m+1$. 

In the complete bipartite graph, we say that two vertices in part $A$ are in the same class if they have the same orientations on the edges with every vertex from part $B$, and similarly for the vertices in part $B$. In the matrix representation of the graph, two vertices of part $A$ in the same class will be represented by two rows in the same row class, and two vertices of part $B$ in the same class will be represented by two columns in the same column class. Note that using this algorithm it is guaranteed that exactly one vertex from each class in part $A$ and one vertex from each class in part $B$ are used. Note that the path produced by the above procedure has exactly one vertex from each class.

That the resulting path is indeed longest possible then follows from the fact that any path on an acyclic orientation of $K_{n,k}$ can contain at most one element from each class.

Let $a_1$ and $a_2$ be two vertices of $A$ that are in the same class. Then $a_1$ and $a_2$ have the same direction on the edges between each vertex in $B$. Suppose there exists a path $P$ that contains both $a_1$ and $a_2$. Note that a path in $G$ must alternate from vertices in $A$ to vertices in $B$. Then there is some $b_i \in B$ such that we have $(\dots, a_1, b_i, \dots )$ in the path $P$. Then since $a_1$ and $a_2$ are in the same class, the edge between $a_2$ and $b_i$ is also oriented toward $b_i$, so the path can not immediately go back to $a_2$. Then there is at least one vertex in each of $A$ and $B$ in between $a_1$ and $a_2$ in the path $P$. Suppose we have $(\dots, a_1, b_i, a_j, \dots , b_k, a_2, \dots)$ for some $a_j \in A$, $b_k \in B$. Since $a_1$ and $a_2$ are in the same class, the edge connecting $b_k$ to $a_1$ is also oriented toward $a_1$, thus creating a cycle. This is a contradiction. The same argument can be applied for vertices of $B$ that are in the same class. Thus, any path has at most one vertex from each class.
\end{proof}

It should be noted that the path itself is not necessarily unique.
For instance, returning to the above example, we see multiple alternative choices of entries that produces a path of equal length $\ell = 7$, such as (following the circled entries as before, from top right to bottom left)
$$
\left[ \begin{array}{ccccc}
      1 & 1 & 1 & 1 & \circled{1} \\
     1 & 1 & 1 & 1 & 0 \\
     1 & 1 & \circled{1} & 1  & \circled{0} \\
     1 & \circled{1} & \circled{0} & 0 & 0 \\
     \circled{1} & \circled{0} & 0 & 0 & 0 \\
     1 & 0 & 0 & 0 & 0 \\
\end{array} \right] .$$

\begin{proof}[Proof of Theorem \ref{thm:main}]
Let us define $b_{n,k}, c_{n,k},$ and $d_{n,k}$ as follows:
$$b_{n,k}(m) = (m!)^2 \stirling{n+1}{m+1} \stirling{k+1}{m+1}$$  
$$c_{n,k}(m) = (m!)^2 \stirling{n+1}{m+1} \stirling{k}{m}$$  $$d_{n,k}(m)=(m!)^2 \stirling{n}{m} \stirling{k}{m}.$$
Recall from the proof of Theorem \ref{thm:polyB} that $b_{n,k}$ counts the number of $n \times k$ lonesum matrices with $m$ nonzero row/column classes in which a row and/or column of 0s is allowed. Similarly, $c_{n,k}$ counts the number of $n \times k$ lonesum matrices with $m$ nonzero row/column classes in which a row of 0s is allowed, but not a column, and $d_{n,k}$ counts the number of $n \times k$ lonesum matrices with $m$ nonzero row/column classes in which no row or column of 0s is allowed. Note that by symmetry $c_{k,n}$ counts the number of $n \times k$ lonesum matrices with $m$ nonzero row/column classes in which a column of $0$s is allowed, but not a row.

Recall the exponential generating function for the Stirling numbers of the second kind, originally stated in Equations \eqref{eq:stirlinggf2} and \eqref{eq:stirlinggf}, which will be used in our results: 
$$ \sum_{n=k}^{\infty} \stirling{n}{k} \frac{x^n}{n!}= \frac{(e^x-1)^k}{k!},$$
and the shifted version
$$ \sum_{n=k}^{\infty} \stirling{n+1}{k+1} \frac{x^n}{n!} = \frac{e^x(e^x-1)^k}{k!}.$$

We will separate the counting into cases depending on the parity of the length of the longest path. First suppose that the length of the longest path is odd. By Lemma \ref{lemma:ell}, this can happen in two different ways; either $M$ contains no row or column of all $0$s, or $M$ contains both a row and a column of all $0$s.

Let $G_{n,k}(\ell)$ denote the number of acyclic orientations on a complete bipartite graph with parts of sizes $n$ and $k$ having longest path with length equal to $\ell$. Then first considering the odd case, when $\ell = 2m+1$, we have by Lemma \ref{lemma:ell} the following generating function: 
$$\sum_{n,k,m} G_{n,k}(2m+1) u^{2m+1} \frac{x^n}{n!} \frac{y^k}{k!} = \sum_{n,k,m} \left[ d_{n,k}(m+1) + (b_{n,k} - c_{n,k} - c_{k,n} + d_{n,k})(m) \right] u^{2m+1} \frac{x^n}{n!} \frac{y^k}{k!},$$
where we have used the inclusion-exclusion principle to obtain the count $(b_{n,k} - c_{n,k} - c_{k,n} + d_{n,k})(m)$ for the lonesum matrices with both a row and a column of all $0$s.

We want to obtain an analytic expression for this generating function. Due to space constraints, we will address each term separately. Using the known generating functions stated above, we have the following: 
%%%%% ODD

Let $$D_{m+1}(x,y,u)=\sum_n \sum_k \sum_m d_{n,k}(m+1) u^{2m+1}\frac{x^n}{n!} \frac{y^k}{k!}.$$
Then by substituting in the expression for $d_{n,k}(m+1)$ and grouping by variables, we have \begin{align*}
D_{m+1}(x,y,u)  &=
    \sum_n \sum_k \sum_m (m+1)!^2 \stirling{n}{m+1} \stirling{k}{m+1} \frac{x^n}{n!} \frac{y^k}{k!} u^{2m+1} \\
    &= \sum_m (m+1)!^2 u^{2m+1} \sum_n \stirling{n}{m+1} \frac{x^n}{n!} \sum_k \stirling{k}{m+1} \frac{y^k}{k!}. 
\end{align*}

We can then use the known generating functions for the Stirling numbers, and after some simplification, we have 
\begin{align*}
 D_{m+1}(x,y,u)  &= \sum_m (m+1)!^2 u^{2m+1} \frac{(e^x-1)^{m+1}}{(m+1)!} \frac{(e^y-1)^{m+1}}{(m+1)!} \\
    &= \sum_m u^{2m+1} (e^x-1)^{m+1} (e^y-1)^{m+1} \\
    &= \sum_m u(e^x-1)(e^y-1) \left[ u^2 (e^x-1)(e^y-1) \right] ^m .
\end{align*}

Finally, by recognizing this as a geometric series, we obtain
\begin{equation}
    D_{m+1}(x,y,u) = \frac{u(e^x-1)(e^y-1)}{1-u^2(e^x-1)(e^y-1)}.
\end{equation}

Similarly, we can obtain the following analytic expressions for the remaining terms, shown here with some of the intermediate steps omitted (since the details are similar to the steps above):
%% D(M+1)
\begin{align*}
    \sum_n \sum_k \sum_m b_{n,k}(m)u^{2m+1} \frac{x^n}{n!} \frac{y^k}{k!}  &= \sum_n \sum_k \sum_m (m!)^2 \stirling{n+1}{m+1} \stirling{k+1}{m+1} \frac{x^n}{n!} \frac{y^k}{k!} u^{2m+1} \\
    %&= \sum_m (m!)^2 u^{2m+1} \sum_n \stirling{n+1}{m+1} \frac{x^n}{n!} \sum_k \stirling{k+1}{m+1} \frac{y^k}{k!} \\
    %&= \sum_m (m!)^2 u^{2m+1} \frac{e^x (e^x-1)^m}{m!} \frac{e^y (e^y-1)^m}{m!}\\
    %&= \sum_m u^{2m+1} e^x(e^x-1)^m e^y(e^y-1)^m \\
    %&= \sum_m ue^xe^y\left[ u^2 (e^x-1)(e^y-1) \right]^m \\
    &= \frac{ue^xe^y}{1-u^2(e^x-1)(e^y-1)}
\end{align*}

%% C(N,K)
\begin{align*}
    \sum_n \sum_k \sum_m c_{n,k}(m)u^{2m+1} \frac{x^n}{n!} \frac{y^k}{k!}  &= \sum_n \sum_k \sum_m (m!)^2 \stirling{n+1}{m+1} \stirling{k}{m} \frac{x^n}{n!} \frac{y^k}{k!} u^{2m+1} \\
    %&= \sum_m (m!)^2 u^{2m+1} \sum_n \stirling{n+1}{m+1} \frac{x^n}{n!} \sum_k \stirling{k}{m} \frac{y^k}{k!} \\
    %&= \sum_m (m!)^2 u^{2m+1} \frac{e^x (e^x-1)^m}{m!} \frac{ (e^y-1)^m}{m!}\\
    %&= \sum_m u^{2m+1} e^x(e^x-1)^m (e^y-1)^m \\
    %&= \sum_m ue^x\left[ u^2 (e^x-1)(e^y-1) \right]^m \\
    &= \frac{ue^x}{1-u^2(e^x-1)(e^y-1)}
\end{align*}

%% C(K,N)
\begin{align*}
    \sum_n \sum_k \sum_m c_{k,n}(m) u^{2m+1}\frac{x^n}{n!} \frac{y^k}{k!}  &= \sum_n \sum_k \sum_m (m!)^2 \stirling{n}{m} \stirling{k+1}{m+1} \frac{x^n}{n!} \frac{y^k}{k!} u^{2m+1} \\
    %&= \sum_m (m!)^2 u^{2m+1} \sum_n \stirling{n}{m} \frac{x^n}{n!} \sum_k \stirling{k+1}{m+1} \frac{y^k}{k!} \\
    %&= \sum_m (m!)^2 u^{2m+1} \frac{(e^x-1)^m}{m!} \frac{ e^y (e^y-1)^m}{m!}\\
    %&= \sum_m u^{2m+1} (e^x-1)^m e^y (e^y-1)^m \\
    %&= \sum_m ue^y\left[ u^2 (e^x-1)(e^y-1) \right]^m \\
    &= \frac{ue^y}{1-u^2(e^x-1)(e^y-1)}
\end{align*}

%% D(M)
\begin{align*}
     \sum_n \sum_k \sum_m d_{n,k}(m) u^{2m+1}\frac{x^n}{n!} \frac{y^k}{k!}  &= \sum_n \sum_k \sum_m (m!)^2 \stirling{n}{m} \stirling{k}{m} \frac{x^n}{n!} \frac{y^k}{k!} u^{2m+1} \\
    %&= \sum_m (m!)^2 u^{2m+1} \sum_n \stirling{n}{m} \frac{x^n}{n!} \sum_k \stirling{k}{m} \frac{y^k}{k!} \\
    %&= \sum_m (m!)^2 u^{2m+1} \frac{(e^x-1)^m}{m!} \frac{ (e^y-1)^m}{m!}\\
    %&= \sum_m u^{2m+1} (e^x-1)^m  (e^y-1)^m \\
    %&= \sum_m u\left[ u^2 (e^x-1)(e^y-1) \right]^m \\
    &= \frac{u}{1-u^2(e^x-1)(e^y-1)}
\end{align*}

Then by substituting these analytic forms back into the original equation, we have 
\begin{equation}\label{eq:odd}
   \sum_{n,k,m} G_{n,k}(2m+1) u^{2m+1} \frac{x^n}{n!} \frac{y^k}{k!} = \frac{2u(e^x-1)(e^y-1)}{1-u^2(e^x-1)(e^y-1)} .
\end{equation}

%\begin{align*}
%  \sum_{n,k,m} G_{n,k}(2m+1) u^{2m+1} \frac{x^n}{n!} \frac{y^k}{k!} &=   \frac{u(e^x-1)(e^y-1)+ue^xe^y-ue^x-ue^y+u}{1-u^2(e^x-1)(e^y-1)} \\
%    &= \frac{2u(e^x-1)(e^y-1)}{1-u^2(e^x-1)(e^y-1)} 
%\end{align*}

This gives us an analytic form of the generating function for the number of acyclic orientations of $K_{n,k}$ with longest path of length $2m+1$.

Next we will look at the case when the longest path length $\ell = 2m$ is even. By Lemma \ref{lemma:ell}, this happens when $M$ contains a row or column of all 0s, but not both. This leads to the following generating function for the even case (here the added $1$ accounts for the empty case $n=m=\ell=0$).
\be\label{eq:evenG}
\sum_{n,k,m} G_{n,k}(2m) u^{2m} \frac{x^n}{n!} \frac{y^k}{k!} = 1+ \sum_{n,k,m} \left[ c_{n,k}(m) + c_{k,n}(m) - 2 d_{n,k}(m) \right] u^{2m} \frac{x^n}{n!} \frac{y^k}{k!},
\ee
where we have again used a form of the inclusion-exclusion principle.  Namely, to count the matrices with a row or a column consisting of all 0s, but not both, we take those which allow a row of all $0$s ($c_{n,k}$), add those which allow a column of all $0$s ($c_{k,n}$), and then subtract those which do not allow a row or column of all 0s ($d_{n,k}$) twice (since they have been counted twice).

Next we obtain an analytic form of the generating function using steps similar to the odd case.

%% C(N,K)
\begin{align*}
    \sum_n \sum_k \sum_m c_{n,k}(m)u^{2m} \frac{x^n}{n!} \frac{y^k}{k!}  &= \sum_n \sum_k \sum_m (m!)^2 \stirling{n+1}{m+1} \stirling{k}{m} \frac{x^n}{n!} \frac{y^k}{k!} u^{2m} \\
    %&= \sum_m (m!)^2 u^{2m} \sum_n \stirling{n+1}{m+1} \frac{x^n}{n!} \sum_k \stirling{k}{m} \frac{y^k}{k!} \\
    %&= \sum_m (m!)^2 u^{2m} \frac{e^x (e^x-1)^m}{m!} \frac{ (e^y-1)^m}{m!}\\
    %&= \sum_m u^{2m} e^x(e^x-1)^m (e^y-1)^m \\
    %&= \sum_m e^x\left[ u^2 (e^x-1)(e^y-1) \right]^m \\
    &= \frac{e^x}{1-u^2(e^x-1)(e^y-1)}
\end{align*}

%% C(K,N)
\begin{align*}
    \sum_n \sum_k \sum_m c_{k,n}(m) u^{2m}\frac{x^n}{n!} \frac{y^k}{k!}  &= \sum_n \sum_k \sum_m (m!)^2 \stirling{n}{m} \stirling{k+1}{m+1} \frac{x^n}{n!} \frac{y^k}{k!} u^{2m} \\
    %&= \sum_m (m!)^2 u^{2m} \sum_n \stirling{n}{m} \frac{x^n}{n!} \sum_k \stirling{k+1}{m+1} \frac{y^k}{k!} \\
    %&= \sum_m (m!)^2 u^{2m} \frac{(e^x-1)^m}{m!} \frac{ e^y (e^y-1)^m}{m!}\\
    %&= \sum_m u^{2m} (e^x-1)^m e^y (e^y-1)^m \\
    %&= \sum_m e^y\left[ u^2 (e^x-1)(e^y-1) \right]^m \\
    &= \frac{e^y}{1-u^2(e^x-1)(e^y-1)}
\end{align*}

%% D(M)
\begin{align*}
     \sum_n \sum_k \sum_m d_{n,k}(m) u^{2m}\frac{x^n}{n!} \frac{y^k}{k!}  &= \sum_n \sum_k \sum_m (m!)^2 \stirling{n}{m} \stirling{k}{m} \frac{x^n}{n!} \frac{y^k}{k!} u^{2m} \\
    %&= \sum_m (m!)^2 u^{2m} \sum_n \stirling{n}{m} \frac{x^n}{n!} \sum_k \stirling{k}{m} \frac{y^k}{k!} \\
    %&= \sum_m (m!)^2 u^{2m} \frac{(e^x-1)^m}{m!} \frac{ (e^y-1)^m}{m!}\\
    %&= \sum_m u^{2m} (e^x-1)^m  (e^y-1)^m \\
    %&= \sum_m \left[ u^2 (e^x-1)(e^y-1) \right]^m \\
    &= \frac{1}{1-u^2(e^x-1)(e^y-1)}
\end{align*}

Using these in \eqref{eq:evenG}, we obtain 
\begin{equation}\label{eq:even}
    \sum_{n,k,m} G_{n,k}(2m) u^{2m} \frac{x^n}{n!} \frac{y^k}{k!} = 1+ \frac{e^x+e^y-2}{1-u^2(e^x-1)(e^y-1)} .
\end{equation}

Then adding the odd \eqref{eq:odd} and even \eqref{eq:even} parts and simplifying, we obtain the desired analytic expression for the full generating function $$F(x,y,u)= \frac{e^{x+y} - (u-1)^2(e^{x}-1)(e^{y}-1)}{1-u^2(e^x-1)(e^y-1)}.$$
\end{proof}

\section{Proof of Theorem \ref{thm:asymp} (Gaussian limit for longest path length)}\label{sec:AC}
The purpose of this section is to confirm that the distribution of the length of the longest path $\ell$ in the case when $n=k$ is asymptotically Gaussian. We also determine precise asymptotics for the mean and variance of the distribution. 

First, we recall the definition of a \textit{Gaussian distribution}, or normal distribution, which has the probability density function 
$$f(x)=\frac{1}{\sigma \sqrt{2\pi}}e^{-\frac{1}{2} \left( \frac{x-\mu}{\sigma} \right)^2} ,$$
where $\mu$ is the mean of the distribution (which is also its median and mode), $\sigma$ is the standard deviation, and $\sigma^2$ is the variance. The \textit{standard Gaussian} is the special case when $\mu=0$ and $\sigma = 1$.

We also recall what it means for a sequence of random variables to converge in distribution. The random variables $X_n$ are said to \textit{converge in distribution} to a variable $Y$, if, for each real number $t$, one has 
$$ \lim_{n\rightarrow \infty} \mathbb{P}(X_n \leq t) = \mathbb{P} (Y\leq t).$$ 

The following result \cite{Flajolet} reduces the task of proving Theorem \ref{thm:asymp} to that of showing that the sequence of probability generating functions $p_n(u)$ admits the so-called quasi-power condition \eqref{eq:quasipow} which asserts that a certain asymptotic holds uniformly near $u=1$.

In the statement of the theorem, for a function $f(u)$ which is analytic at $u=1$ with $f(1) \neq 0$, we define 
\be\label{eq:m}
m(f)=\frac{f'(1)}{f(1)}
\ee
and
\be\label{eq:v}
v(f)= \frac{f''(1)}{f(1)} + \frac{f'(1)}{f(1)} - \left( \frac{f'(1)}{f(1)} \right) ^2.
\ee

\begin{thm}[{Quasi-powers Theorem~\cite[Thm IX.8]{Flajolet}}]\label{thm:Flaj}
Let the $X_n$ be nonnegative discrete random variables with probability generating functions $p_n(u)$. Assume that, uniformly, in a fixed complex neighborhood of $u=1$, for sequences $\beta_n, \kappa_n \rightarrow + \infty$, there holds 
\be\label{eq:quasipow}
p_n(u)=A(u) \cdot B(u)^{\beta_n} \left( 1+ O \left( \frac{1}{\kappa_n} \right) \right)
\ee
where $A(u), B(u)$ are analytic at $u=1$, and $A(1)=B(1)=1$. Assume finally that $B(u)$ satisfies the so-called ``variability condition" 
$$v \left( B(u) \right) \coloneqq B''(1)+B'(1)-B'(1)^2 \neq 0.$$
Under these conditions, the mean and variance of $X_n$ satisfy
\be\label{eq:mean} \mu_n = \mathbb{E}(X_n) = \beta_n m(B(u)) + m(A(u)) + O(\kappa_n^{-1})
\ee
\be\label{eq:var} \sigma_n^2 = \mathbb{V}(X_n) = \beta_n v(B(u)) + v(A(u)) + O(\kappa_n^{-1}).
\ee

The distribution of $X_n$ is, after standardization, asymptotically Gaussian, and the speed of convergence to the Gaussian limit is $O(\kappa_n^{-1}+\beta_n^{-1})$: 
$$ \mathbb{P} \left\{ \frac{X_n-\mathbb{E}(X_n)}{\sqrt{(\mathbb{V}(X_n)}} \leq x \right\} = \Phi (x) + O\left( \frac{1}{\kappa_n} + \frac{1}{\sqrt{\beta_n}} \right),$$ 
where $\Phi(x)$ is the commulative distribution of a standard normal, i.e.,
$$ \Phi(x) = \frac{1}{\sqrt{2\pi}} \int_{- \infty}^{x} e^{-\omega^2/2} d\omega.$$
\end{thm}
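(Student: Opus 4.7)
The plan is to derive \eqref{eq:mean} and \eqref{eq:var} by differentiating the quasi-power factorization at $u=1$, and then to establish the Gaussian limit with the stated rate by applying a Berry--Esseen smoothing argument to the characteristic functions $\psi_n(t) := \EE\,e^{it(X_n - \EE X_n)/\sqrt{\mathbb{V}(X_n)}}$. The main obstacle will be promoting pointwise convergence of $\psi_n$ to a uniform bound on the Kolmogorov distance with the asserted rate.

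Writing $p_n(u) = A(u) B(u)^{\beta_n}(1 + r_n(u))$ with $r_n(u) = O(\kappa_n^{-1})$ uniformly on some fixed complex disk $D$ about $u=1$, I would first invoke Cauchy's integral formula on a slightly smaller subdisk to conclude that $r_n'(1)$ and $r_n''(1)$ are also $O(\kappa_n^{-1})$. Since $A(1) = B(1) = 1$, we have $p_n(1) = 1 + O(\kappa_n^{-1})$, and a logarithmic differentiation at $u=1$ gives
$$\EE X_n = \frac{p_n'(1)}{p_n(1)} = m(A) + \beta_n\, m(B) + O(\kappa_n^{-1}),$$
which is \eqref{eq:mean}. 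A second logarithmic differentiation combined with the elementary identities $v(fg) = v(f) + v(g)$ when $f(1)=g(1)=1$ and $v(B^{\beta_n}) = \beta_n v(B)$ yields \eqref{eq:var}. The variability condition $v(B) \neq 0$, which must in fact be positive because $\sigma_n^2 = \mathbb{V}(X_n) \geq 0$ and $\beta_n \to \infty$, then forces $\sigma_n^2 \to \infty$, making the rescaling below well-defined.

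For the Gaussian limit, substitute $u = e^{it/\sigma_n}$ (which lies inside $D$ for $|t|$ in any compact range once $n$ is large) to obtain
$$\psi_n(t) = e^{-it\mu_n/\sigma_n} A(e^{it/\sigma_n}) B(e^{it/\sigma_n})^{\beta_n} \bigl(1 + O(\kappa_n^{-1})\bigr).$$
Taking logarithms (choosing the branch with $\log A(1) = \log B(1) = 0$) and Taylor-expanding $\log A$ and $\log B$ about $u=1$, the terms linear in $t$ cancel against the centering phase $-it\mu_n/\sigma_n$ up to $O(\kappa_n^{-1}|t|)$ by \eqref{eq:mean}, the quadratic contribution equals $-t^2\beta_n v(B)/(2\sigma_n^2) = -t^2/2 + O(t^2/\beta_n)$ by \eqref{eq:var}, and the cubic Taylor remainder is $O(|t|^3 \beta_n/\sigma_n^3) = O(|t|^3/\sqrt{\beta_n})$. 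Hence $\psi_n(t) \to e^{-t^2/2}$ pointwise, and L\'evy's continuity theorem already delivers convergence in distribution to the standard Gaussian.

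To obtain the quantitative rate, apply the Berry--Esseen smoothing inequality
$$\sup_x \bigl|\mathbb{P}(X_n^* \leq x) - \Phi(x)\bigr| \leq \frac{1}{\pi}\int_{-T}^{T} \frac{|\psi_n(t) - e^{-t^2/2}|}{|t|}\,dt + \frac{C}{T}$$
with $T := c\sqrt{\beta_n}$. On the central range $|t| \leq T_0$ (any fixed constant), the expansion above gives $|\psi_n(t) - e^{-t^2/2}| \leq e^{-t^2/4}\bigl(O(\kappa_n^{-1}) + O(|t|^3/\sqrt{\beta_n})\bigr)$, contributing $O(\kappa_n^{-1} + \beta_n^{-1/2})$ to the integral; on the outer range $T_0 < |t| \leq c\sqrt{\beta_n}$, the factor $|B(e^{it/\sigma_n})|^{\beta_n}$ is bounded by $e^{-c' t^2}$ (a consequence of strict concavity of $\log|B(e^{i\theta})|$ at $\theta = 0$, guaranteed by $v(B)>0$, once $c$ is chosen small enough to stay inside the region of that concavity estimate), so the corresponding integral is exponentially small. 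Combined with the $C/T = O(\beta_n^{-1/2})$ tail term, this produces the claimed rate $O(\kappa_n^{-1}+\beta_n^{-1/2})$, completing the proof.
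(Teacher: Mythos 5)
This theorem is not proved in the paper at all: it is quoted from Flajolet--Sedgewick \cite[Thm.~IX.8]{Flajolet} and used as a black box, the introduction offering only a heuristic sketch (L\'evy continuity illustrated on the binomial PGF). What you propose is essentially the classical proof of that cited result, and its architecture is sound: logarithmic differentiation of the quasi-power factorization at $u=1$, with Cauchy estimates on a slightly smaller disk transferring the uniform $O(\kappa_n^{-1})$ bound to $r_n'(1)$ and $r_n''(1)$, correctly yields \eqref{eq:mean} and \eqref{eq:var}; the observation that $v(B)>0$ (hence $\sigma_n^2\to\infty$) and the pointwise convergence $\psi_n(t)\to e^{-t^2/2}$ are also correct; and Esseen's smoothing inequality with cutoff $T\asymp\sqrt{\beta_n}$ is the right tool for the speed of convergence.

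There are, however, two technical slips in the smoothing step, both repairable by the same refinement. (i) Your central-range bound $|\psi_n(t)-e^{-t^2/2}|\le e^{-t^2/4}\bigl(O(\kappa_n^{-1})+O(|t|^3\beta_n^{-1/2})\bigr)$ cannot simply be divided by $|t|$ and integrated: the term $\kappa_n^{-1}/|t|$ diverges logarithmically at $t=0$, so the claimed $O(\kappa_n^{-1})$ contribution does not follow as written. The fix is that the error must carry a factor vanishing at $t=0$: since $p_n(1)=1$ and $A(1)=B(1)=1$, the relative error $r_n(u)=p_n(u)A(u)^{-1}B(u)^{-\beta_n}-1$ vanishes at $u=1$, and being analytic and uniformly $O(\kappa_n^{-1})$ on a fixed disk, the Schwarz lemma (or a Cauchy bound on $r_n'$) gives $|r_n(e^{it/\sigma_n})|=O(\kappa_n^{-1}|t|/\sigma_n)$; the linear-term cancellation you invoke likewise carries a factor $|t|/\sigma_n$. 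Hence $\log\psi_n(t)+t^2/2=O\bigl(|t|\kappa_n^{-1}+t^2\beta_n^{-1}+|t|^3\beta_n^{-1/2}\bigr)$, and after dividing by $|t|$ the integral is indeed $O(\kappa_n^{-1}+\beta_n^{-1/2})$. (ii) Splitting at a \emph{fixed} $T_0$ does not give the rate: on the outer range your bound $Ce^{-c't^2}$ only shows that the integral is of order $e^{-c'T_0^2}$, a constant independent of $n$, which ruins not just the rate but even the decay of the Kolmogorov distance obtained from the smoothing inequality. Either let $T_0=T_0(n)\asymp\sqrt{\log(\kappa_n\beta_n)}$ grow with $n$, or, more simply, note that on the whole range $|t|\le c\sqrt{\beta_n}$ the cubic remainder is at most $Cc\,t^2$, so for $c$ small it can be absorbed into the exponential and the bound from (i) holds on the entire integration range; combined with the $C/T=O(\beta_n^{-1/2})$ tail this produces the stated $O(\kappa_n^{-1}+\beta_n^{-1/2})$.
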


The quasi-power condition \eqref{eq:quasipow} needed for applying the above theorem will be established using Analytic Combinatorics in Several Variables (ACSV), namely, the following result of Pemantle and Wilson \cite{PemantleWilson} that provides asymptotics for coefficients of multivariate generating functions of the form $F = G/H$, where $G$ and $H$ are entire functions.  The singularity set $$\cV = \{(x,y)\in\C^2 : H(x,y)=0 \}$$ is important in determining asymptotics. A singularity $(p,q) \in \cV$ is called \textit{minimal} if there does not exist $(x,y) \in \C^2$ such that $H(x,y)=0$ with $|x| \leq |p|$ and $|y| \leq |q|$, with one of the inequalities being strict. Equivalently, the minimal singularities are the elements of $\cV$ on the closure of the power series domain of convergence $\cD$ of $F(x,y)$. A singularity $(p,q) \in \cV$ is \textit{strictly minimal} if it is minimal and there are no other singularities with the same coordinate-wise modulus.

%% PEMANTLE THEOREM %% 

%\commE{Since we only find/check strict minimality of a point in the main diagonal direction, we don't verify the smoothness, and we may need to instead apply the version of the Pemantle-Wilson theorem stated in the second edition of the book coauthored with Melczer (which starts with a compact set of singularities that we could take to be a single point)... I've updated the theorem accordingly.}

\begin{thm}[{Pemantle and Wilson~\cite[Thm. 9.5.7]{PemantleWilson}, Pemantle, Wilson, and Melczer \cite[Thm. 9.4]{MPW}}]  
\label{thm:PW}
Let $F(x,y)$ be the ratio of entire functions $G,H$ with singularity set $\cV = \left\{(x, y) \in \C^2: H(x, y) = 0\right\}$ and power series expansion $$F(x, y) = \sum_{r, s \geq 0} A_{r,s} x^ry^s$$ convergent near the origin. Suppose there is a compact set $\cR$ of directions (i.e., a set of unit direction vectors) such that
\begin{enumerate}  
    \item[$(i)$] for each $r,s>0$ having direction in $\cR$ there exists a unique minimal point $(x_{r,s},y_{r,s}) \in \cV$ solving the system
\begin{equation} H(x,y) = sxH_x(x,y) - ryH_y(x,y) = 0, \label{eq:crit} \end{equation}
    and this point is strictly minimal;
    \item[$(ii)$]  The point $(x_{r,s},y_{r,s})$ varies smoothly with $r,s$ restricting its direction to $\cR$;
    \item[$(iii)$]  The point $(x_{r,s},y_{r,s})$ is not in the zero set of $G(x,y)$ nor in the zero set of
        \be\label{eq:Q}
        Q(x,y):= -y^2H_y^2xH_x-yH_yx^2H_x^2-x^2y^2(H_y^2H_{xx}+H_x^2H_{yy}-2H_xH_yH_{xy}). 
        \ee
\end{enumerate}  
Then as $r, s \to \infty$
\begin{equation}\label{eq:PW}
 A_{r,s} = \left( G(x,y) + O(s^{-1}) \right) \frac{1}{\sqrt{2\pi}}x^{-r}y^{-s}\sqrt{\frac{-yH_y(x,y)}{sQ(x,y)}},
\end{equation}
where the constant in the error term $O(s^{-1})$ is uniform over $r,s$ with direction in $\cR$.
\end{thm}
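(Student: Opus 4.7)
The plan is to prove the formula via the classical strategy of multivariate analytic combinatorics: start with the iterated Cauchy integral formula, deform contours past the contributing minimal singularity, convert the resulting double integral into a single integral by a residue computation, and finish with a saddle-point analysis. Concretely, for $(r,s)$ with direction in $\cR$, let $(x,y) \coloneqq (x_{r,s}, y_{r,s})$ be the critical point guaranteed by hypothesis (i). Because $F$ converges in a neighborhood of the origin, Cauchy's formula gives
\begin{equation}
A_{r,s} = \frac{1}{(2\pi i)^2} \oint_{|y|=|y|-\delta}\oint_{|x|=|x|-\delta} \frac{G(x,y)}{H(x,y)}\frac{dx}{x^{r+1}}\frac{dy}{y^{s+1}},
\end{equation}
for small $\delta>0$. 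Strict minimality of $(x,y)$ together with hypothesis (ii) will let me enlarge the inner $y$-contour to radius $|y|+\delta'$; the error incurred is exponentially smaller than the expected asymptotic because along the enlarged contour $|y|^{-s-1}$ is uniformly smaller, and $H$ has no zero of the same $x$-modulus other than the analytic branch $y=\rho(x)$ in a neighborhood of $x$.

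The difference of the two $y$-contour integrals equals $-2\pi i$ times the residue of the inner integrand at $y = \rho(x)$, reducing the problem to
\begin{equation}
A_{r,s} = \frac{1}{2\pi i}\oint_{|x|=|x|} \frac{G(x,\rho(x))}{-H_y(x,\rho(x))}\,\frac{1}{x^{r+1}\rho(x)^{s+1}}\,dx \;+\; \text{(exponentially negligible)}.
\end{equation}
Hypothesis (iii), namely $G(x,y)\neq 0$, guarantees that the amplitude does not vanish; the vanishing of $Q(x,y)$ would be the failure of non-degeneracy of the upcoming saddle point. Writing the integrand as $e^{-s\,\psi(x)}\cdot(\text{slowly varying amplitude})$ with phase $\psi(x) = (r/s)\log x + \log \rho(x)$, the saddle-point equation $\psi'(x)=0$, after using implicit differentiation $\rho'(x) = -H_x/H_y$ evaluated at $(x,\rho(x))$, simplifies to $sxH_x - ryH_y = 0$, which is exactly the second equation of \eqref{eq:crit}. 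Thus $(x,y)$ is the unique saddle.

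I would then localize the contour to a small arc around $x$ (the ``contributing arc''), parametrize $x = x\,e^{i\theta}$, and expand $\psi$ to second order. A direct computation of $\psi''(x)$ using $\rho' = -H_x/H_y$ and the chain rule yields $x^2\psi''(x) = Q(x,y)/(y^2 H_y^2)$, so that the standard Gaussian saddle-point formula returns $\sqrt{-yH_y/(sQ)}$ together with the geometric factor $x^{-r}y^{-s}/\sqrt{2\pi}$ and the amplitude $G(x,y)/(-H_y)$, and the explicit form of the next-order term in the stationary-phase expansion absorbs into an $O(s^{-1})$ correction. The uniformity of the constant in $O(s^{-1})$ over the compact set of directions $\cR$ follows because by hypothesis (ii) the saddle $(x_{r,s},y_{r,s})$, the radius of contour deformation, the lower bound on $|Q|$, and all higher derivatives appearing in the remainder of the stationary-phase expansion are continuous functions of the direction and hence uniformly bounded on $\cR$.

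The main technical obstacle is the contour-deformation step: one has to show that the enlarged $y$-contour can be taken uniformly over $x$ ranging over the original $|x|$-circle and uniformly over directions in $\cR$, and that the parts of the $x$-integral away from the contributing arc are exponentially suppressed. This requires showing that $|x|^{-r}|\rho(x)|^{-s}$, along the full circle $|x|=|x|$, attains its maximum strictly and uniquely at $x=x$; strict minimality of $(x,y)$ together with the fact that $\rho$ extends analytically on a neighborhood of the arc (a consequence of strict minimality and hypothesis (ii)) lets me establish this height inequality by a convexity/strict-minimum argument on $\log|\rho(x\,e^{i\theta})|$ as a function of $\theta$, which is precisely the ``height function'' analysis at the heart of ACSV. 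Once these uniform estimates are in place, the standard machinery of Fourier-Laplace integrals (e.g., the treatment in Pemantle-Wilson-Melczer) delivers the stated asymptotic with the claimed uniform error term.
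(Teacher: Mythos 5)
The paper itself does not prove this theorem: it is imported as a black box from Pemantle--Wilson and Melczer--Pemantle--Wilson, and the only in-paper counterpart is the expository sketch in the introduction (done for the special case $u=1$, i.e., the asymptotics of $B_{n,n}$), which follows exactly the route you propose: iterated Cauchy integral, pushing the $y$-contour past the strictly minimal singularity, residue reduction to a single $x$-integral, and a saddle-point analysis. So your architecture is the standard one and agrees with both the paper's sketch and the cited proofs, including the correct identification of the saddle equation with \eqref{eq:crit} and of strict minimality as the engine behind the contour-pushing and the uniformity over the compact set $\cR$.

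There is, however, one concrete error in the step that carries the whole content of the theorem, namely the explicit constant. With $\psi(x)=(r/s)\log x+\log\rho(x)$, $\rho'=-H_x/H_y$ and $\rho''=-(H_{xx}H_y^2-2H_xH_yH_{xy}+H_x^2H_{yy})/H_y^3$, evaluation at the critical point (where $\rho=y$ and $r/s=xH_x/(yH_y)$) gives $x^2\psi''(x)=Q(x,y)/(y^3H_y^3)$, not $Q/(y^2H_y^2)$ as you claim. This is not cosmetic: the residue contributes the amplitude $G/(-H_y\,\rho(x))$ (note the extra $1/\rho$ coming from $\rho(x)^{-s-1}$), and combining it with the Gaussian factor $\sqrt{2\pi/(s\,x^2\psi'')}$ and the $1/(2\pi)$ normalization produces, with the correct identity, a leftover factor of modulus $|yH_y|^{1/2}$, i.e.\ exactly $\sqrt{-yH_y/(sQ)}$ as in \eqref{eq:PW}; with your version of the identity the $yH_y$ factors cancel and the computation would end at $G\,x^{-r}y^{-s}/\sqrt{2\pi sQ}$, which contradicts the formula you are trying to prove. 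A smaller point: in the ``main technical obstacle'' paragraph, convexity of $\log|\rho(xe^{i\theta})|$ is not the right mechanism (and need not hold); what the cited proofs actually use is strict minimality plus compactness, namely that for $\theta$ bounded away from $0$ there are no zeros of $H$ over $xe^{i\theta}$ with $|y|\le|y_{r,s}|+\delta'$, so on those arcs the $y$-contour is pushed outward with no residue and the contribution is exponentially negligible, uniformly for directions in $\cR$. With the second-derivative identity corrected and that step rephrased, your outline matches the known proof.
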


Specifically, we will apply this result along the diagonal direction $r=s$.
In order to apply Theorem \ref{thm:PW}, we will need the following lemma ensuring strict minimality.

\begin{lemma} \label{lem:SM}
There exists $\delta >0$ such that for all $ u \in \mathbb{C}$ satisfying  $\vert u-1 \vert < \delta$, the point $(a,b) = \left( \log \left( 1 + \frac{1}{u} \right) , \log \left( 1 + \frac{1}{u} \right) \right)$ is strictly minimal with respect to $F(x,y,u)$, viewed as a bivariate generating function with $u$ as a parameter. 
\end{lemma}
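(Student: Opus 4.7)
The plan is to establish strict minimality in two stages: first at $u = 1$ via a direct triangle-inequality argument, and then in a small complex neighborhood of $u = 1$ by combining a compactness argument with a local second-order analysis near the point $(\log 2, \log 2)$. For the base case $u = 1$, where $(a, b) = (\log 2, \log 2)$, I would exploit the termwise bound $|e^z - 1| \leq e^{|z|} - 1$ (obtained by applying the triangle inequality to $e^z - 1 = \sum_{n \geq 1} z^n/n!$), noting that equality holds if and only if $z \in [0, \infty)$. For any $(x,y)$ on $\cV_1 := \{(x,y) : (e^x-1)(e^y-1) = 1\}$ satisfying $|x|, |y| \leq \log 2$ this yields
$$ 1 = |e^x - 1|\,|e^y - 1| \leq (e^{|x|} - 1)(e^{|y|} - 1) \leq (e^{\log 2} - 1)^2 = 1, $$
forcing equality throughout and hence $x = y = \log 2$. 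Thus $(\log 2, \log 2)$ is the unique element of $\cV_1$ in the closed polydisc $\{|x|, |y| \leq \log 2\}$, which is exactly strict minimality at $u = 1$.

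To extend this to $u$ near $1$, I would argue by contradiction: if strict minimality fails for some sequence $u_n \to 1$, there exist $(x_n, y_n) \in \cV_{u_n}$ with $(x_n, y_n) \neq (a(u_n), b(u_n))$ and $|x_n| \leq |a(u_n)|$, $|y_n| \leq |b(u_n)|$. Since $|a(u_n)|, |b(u_n)| \to \log 2$, the sequence $(x_n, y_n)$ is bounded; passing to a subsequence gives a limit $(x^*, y^*) \in \cV_1$ with $|x^*|, |y^*| \leq \log 2$, which by the base case forces $(x^*, y^*) = (\log 2, \log 2)$. Since $(a(u_n), b(u_n))$ also tends to $(\log 2, \log 2)$, the contradiction must be local: the competing sequence eventually enters any fixed neighborhood of $(\log 2, \log 2)$, so it suffices to establish a local uniqueness statement around this point.

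For the local step, I would substitute $x = a(u) + \xi$, $y = b(u) + \eta$ in the defining equation $u^2(e^x - 1)(e^y - 1) = 1$ and, using $e^a - 1 = 1/u$ to rewrite $u(e^{a+\xi} - 1) = 1 + A(e^\xi - 1)$ with $A := u e^a = u + 1$, reduce the equation to $(1 + A(e^\xi - 1))(1 + A(e^\eta - 1)) = 1$. A standard power-series inversion then yields $\eta = -\xi + (A - 1)\xi^2 + O(\xi^3)$, and expanding $|x|^2$ and $|y|^2$ in $(\xi, \eta)$ gives
$$ |x|^2 + |y|^2 - 2|a(u)|^2 = 2|\xi|^2 + 2\Re\!\bigl((A-1)\,\overline{a(u)}\,\xi^2\bigr) + O(|\xi|^3). $$
At $u = 1$ this is bounded below by $2(1 - \log 2)|\xi|^2 + O(|\xi|^3)$, strictly positive for small $\xi \neq 0$ thanks to $\log 2 < 1$; by continuity the same positive lower bound (with a slightly reduced constant) persists for $u$ in a small neighborhood of $1$. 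The modulus constraints $|x| \leq |a(u)|$, $|y| \leq |b(u)|$ imply the left-hand side is $\leq 0$, yielding the required contradiction. The main obstacle is carrying out this second-order local analysis carefully and verifying uniform positivity of the coefficient of $|\xi|^2$, which ultimately hinges on the numerical fact $\log 2 < 1$ (and, correspondingly, on $|A - 1|\,|\overline{a(u)}|$ remaining bounded away from $1$ for $u$ close to $1$).
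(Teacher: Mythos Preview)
Your proposal is correct and takes a genuinely different route from the paper. The paper reformulates the problem in the image plane under $w = u(e^x-1)$, reducing strict minimality to showing that the region $A = \{u(e^x-1):|x|\le r\}$ (with $r=|a(u)|$) meets its image under inversion $w\mapsto 1/w$ only at $w=1$; it then carries out a curvature analysis of the boundary curve $\gamma=\partial A$ (computing $K_\gamma(\theta)$ explicitly), sandwiches the relevant arcs between two auxiliary tangent circles $C_1$ and $C_2=1/C_1$, and finishes with a numerical curvature comparison at $u=1$ extended by continuity. Your argument bypasses all of this geometry: the base case $u=1$ is settled in one line via $|e^z-1|\le e^{|z|}-1$, and the extension to nearby $u$ is handled by the standard compactness-plus-local-expansion paradigm, with the local positivity reducing to the inequality $|(A-1)\overline{a(u)}|\to\log 2<1$. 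Your approach is more elementary and more portable (it is the template one would use for many ``smooth minimal point'' verifications in ACSV), while the paper's curvature argument is more explicit and geometric, giving a concrete picture of how $A$ sits relative to its inversion. Both ultimately rest on a computation at $u=1$ followed by continuity, but the computations themselves are entirely different.
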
 

We defer the (somewhat technical) proof of the lemma in favor of showing how it is used to prove the main theorem.

\begin{proof}[Proof of Theorem \ref{thm:asymp}]
Recall the expression in terms of coefficient extraction for the probability generating function $p_n(u)$ for the case when both parts of the bipartite graph are of size $n$: 
$$p_{n}(u)= \frac{[x^ny^n]F(x,y,u)}{[x^ny^n]F(x,y,1)}, \quad F(x,y,u)= \frac{e^{x+y} - (u-1)^2(e^{x}-1)(e^{y}-1)}{1-u^2(e^x-1)(e^y-1)} = \frac{G(x,y,u)}{H(x,y,u)},$$
where $G(x,y,u)=e^{x+y} - (u-1)^2(e^{x}-1)(e^{y}-1)$, $H(x,y,u)= 1-u^2(e^x-1)(e^y-1)$ are each entire functions.
In order to use Theorem \ref{thm:Flaj} we need to prove a quasi-power statement for our probability generating function.  Specifically, we will show
\be\label{eq:quasipow2}
p_n(u) = A(u)\cdot B(u)^n \left( 1+ O\left( \frac{1}{n} \right) \right) .
\ee
To do this we will utilize Theorem \ref{thm:PW}. Since we are considering the diagonal case $n=k$, condition \eqref{eq:crit} simplifies to $x=y$, and we find a unique candidate in the intersection $\{ x=y \} \cap \left\{ (e^x-1)(e^y-1)= \frac{1}{u^2} \right\}$, which is $x=y=\log \left( 1+ \frac{1}{u} \right)$.  This point is strictly minimal by Lemma \ref{lem:SM}, and $G(\log \left( 1+ \frac{1}{u} \right),\log \left( 1+ \frac{1}{u} \right),u ) = 4/u $. We compute the partial derivatives of $H$ and evaluate them at $x=y=a(u):=\log \left( 1+ \frac{1}{u} \right)$.
$$H_x(a(u),a(u),u) = H_y( a(u),a(u),u) = H_{xx}(a(u),a(u),u) = H_{yy}(a(u),a(u),u) = -(u+1)$$
$$H_{xy}(a(u),a(u),u) = -(u+1)^2$$

$Q$ defined in \eqref{eq:Q}
$$Q(a(u),a(u),u) = 2(u+1)^3\log^3 \left( 1+ \frac{1}{u} \right) \left[ 1 - u\log \left( 1+ \frac{1}{u} \right) \right].$$
This verifies conditions (i) and (iii) in Theorem \ref{thm:PW} (and condition (ii) is trivially satisfied since we are considering just the diagonal direction, so the set $\cR$ is a singleton).

Applying Theorem \ref{thm:PW} to both numerator and denominator of $p_n(u)$, we obtain
$$ [x^ny^n]F(x,y,u)= (n!)^2 \frac{2}{u(u+1)}\sqrt{\frac{1}{n\pi(1-u\log \left( 1+\frac{1}{u} \right) )}}\left( \frac{1}{\log \left( 1+\frac{1}{u} \right) } \right) ^{2n+1} \left( 1 + O(n^{-1})  \right), \quad \text{as \, } n \rightarrow \infty. 
$$
$$[x^ny^n]F(x,y,1)= (n!)^2 \sqrt{\frac{1}{n\pi(1-\log 2)}}\left( \frac{1}{\log 2} \right) ^{2n+1} \left( 1 + O(n^{-1})  \right), \quad \text{as \, } n \rightarrow \infty. 
$$
Hence, we have
$$p_n(u)=\left( \frac{\log 2}{\log \left( 1+\frac{1}{u} \right) } \right) ^{2n} \left( \frac{2 \log 2}{\log \left( 1+\frac{1}{u} \right) u (u+1)}  \sqrt{\frac{1-\log 2}{1-u \log \left( 1+\frac{1}{u} \right) }} + O\left( \frac{1}{n} \right) \right),$$
%%% Check these parenthesis!!! %%%
which takes the desired form of the quasi-power statement \eqref{eq:quasipow2}, namely with 
$$A(u) = \frac{2 \log 2}{\log \left( 1+\frac{1}{u} \right) u (u+1)} \cdot \sqrt{\frac{1-\log 2}{1-u \log \left( 1+\frac{1}{u} \right) }} , $$ 
and $$B(u) = \left( \frac{\log 2}{\log \left( 1+\frac{1}{u} \right) } \right) ^2 .$$
We conclude that the longest path length converges in distribution (after recentering and rescaling) to a Gaussian as stated in the theorem.  
Moreover, we obtain asymptotics for the mean and variance by applying the statements \eqref{eq:mean} and \eqref{eq:var}.
First, recalling the definitions \eqref{eq:m} and \eqref{eq:v} we compute
\be\label{eq:mA}
m(A(u))=\frac{A'(1)}{A(1)} = 
\frac{8(\log 2)^2-9 \log 2 + 2}{4 \log 2 (1-\log 2)},\ee
\be\label{eq:mB}
m(B(u))=\frac{B'(1)}{B(1)} = \frac{1}{\log 2}
,\ee
\be\label{eq:vA}
v(A(u))= \frac{A''(1)}{A(1)} + \frac{A'(1)}{A(1)} - \left( \frac{A'(1)}{A(1)} \right) ^2 = \frac{-2 (\log 2)^4 + (\log 2)^3 + 2 (\log 2)^2 - 6 \log 2 + 2}{8 (\log 2)^2 (1- \log 2)^2},
\ee
and
\be\label{eq:vB}
v(B(u))= \frac{B''(1)}{B(1)} + \frac{B'(1)}{B(1)} - \left( \frac{B'(1)}{B(1)} \right) ^2 = \frac{ (1-\log 2)}{2 (\log 2)^2}.
\ee
Then using these computations in \eqref{eq:mean}, \eqref{eq:var}, we obtain the desired asymptotics:
$$\mu_n =  \frac{n}{\log 2} + \frac{8(\log 2)^2-9 \log 2 + 2}{4 \log 2 (1-\log 2)} + O(n^{-1}), \quad \text{as \, } n \rightarrow \infty$$ and 
$$\sigma_n^2 = \frac{n (1-\log 2)}{2 (\log 2)^2} + \frac{-2 (\log 2)^4 + (\log 2)^3 + 2 (\log 2)^2 - 6 \log 2 + 2}{8 (\log 2)^2 (1 - \log 2)^2} + O(n^{-1}), \quad \text{as \, } n \rightarrow \infty.$$ 
This concludes the proof.
\end{proof}
It remains to prove Lemma \ref{lem:SM}.  First, we establish some preliminary curvature estimates for the boundary of a certain planar region that will need to be considered.

\begin{lemma} \label{lem:curve}
Fix $u \in \C$ near $u=1$, and define $r = \left| \log \left( 1+\frac{1}{u} \right) \right|$.  Let $\gamma=\left\{ u(e^{re^{i\theta}}-1), \  \theta \in [- \pi, \pi ] \right\}$.  Then for $u$ sufficiently close to $u=1$, we have
\begin{enumerate}
\item[$(i)$] the curvature $K_\gamma(\theta)$ of $\gamma$ is positive for all $\theta \in [-\pi,\pi]$ (i.e., $\gamma$ is convex) and is given by $$K_{\gamma}(\theta) = \frac{1+r \cos \theta}{\vert u \vert re^{r \cos \theta} }.$$

\item[$(ii)$] the function $K_{\gamma}(\theta)$ has a local minimum at $\theta = 0$, and we have 
\begin{equation} \label{eq:min0}
\min_{\theta \in [- \frac{\pi}{2}, \frac{\pi}{2}]} K_{\gamma}(\theta) = K_{\gamma}(0) = \frac{1+ r}{|u|r e^r} > 1. 
\end{equation}

\item[$(iii)$] the function $K_{\gamma}(\theta)$ has a global minimum at $\theta = \pi$, and we have 
\begin{equation} 
\min_{\theta \in [-\pi, \pi]} K_{\gamma}(\theta) = K_{\gamma}(\pi) = \frac{1-r}{|u| r e^{-r}} < 1.
\end{equation}\label{eq:minpi}
\end{enumerate}
\end{lemma}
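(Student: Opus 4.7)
The plan is to derive the explicit formula in (i) by a direct curvature computation and then reduce (ii)--(iii) to one-variable calculus. For (i), I would observe that $\gamma(\theta) = u\,\eta(\theta)$ where $\eta(\theta) := e^{re^{i\theta}} - 1$, so multiplication by the complex constant $u$ rescales signed curvature by $|u|^{-1}$ and hence $K_\gamma = |u|^{-1} K_\eta$. Applying the complex-valued signed-curvature formula $K = \mathrm{Im}(\overline{z'}\, z'')/|z'|^3$ to $\eta$, I would compute
\[
\eta'(\theta) = ir e^{i\theta} e^{re^{i\theta}}, \qquad \eta''(\theta) = -r e^{i\theta}(1+re^{i\theta})\, e^{re^{i\theta}},
\]
and then use $\overline{e^{re^{i\theta}}}\cdot e^{re^{i\theta}} = e^{2r\cos\theta}$ to collapse the exponentials in $\overline{\eta'}\,\eta''$. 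Taking imaginary parts and dividing by $|\eta'|^3 = r^3 e^{3r\cos\theta}$ yields the stated formula for $K_\gamma(\theta)$. Positivity is then immediate: for $u$ sufficiently close to $1$, $r$ lies near $\log 2 < 1$, so $1 + r\cos\theta \ge 1 - r > 0$.

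For (ii) and (iii), I would differentiate $K_\gamma(\theta)$ in $\theta$. The two copies of $-r\sin\theta$ produced by the quotient rule (one from differentiating the numerator $1 + r\cos\theta$, one from the chain rule on $e^{r\cos\theta}$) combine telescopically, giving
\[
\frac{d K_\gamma}{d\theta}(\theta) = \frac{r\sin\theta\cos\theta}{|u|\, e^{r\cos\theta}} = \frac{r \sin(2\theta)}{2|u|\, e^{r\cos\theta}}.
\]
So the critical points in $[-\pi, \pi]$ are exactly $\{0, \pm\pi/2, \pm\pi\}$. A sign analysis of $\sin(2\theta)$ shows $K_\gamma$ is decreasing on $(-\pi/2, 0)$ and increasing on $(0, \pi/2)$, which pins down $\theta = 0$ as the minimum on $[-\pi/2, \pi/2]$; and $K_\gamma$ is increasing on $(-\pi, -\pi/2)$ and decreasing on $(\pi/2, \pi)$, so the global minimum on $[-\pi, \pi]$ is attained at $\theta = \pm\pi$.

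It remains only to check the strict inequalities $K_\gamma(0) > 1$ and $K_\gamma(\pi) < 1$. At $u = 1$ these reduce to
\[
\frac{1 + \log 2}{2\log 2} > 1 \quad \text{and} \quad \frac{2(1 - \log 2)}{\log 2} < 1,
\]
which are in turn equivalent to $1 > \log 2$ and $3\log 2 > 2$ respectively; both are standard numerical facts about $\log 2 \approx 0.693$. Since $K_\gamma(0)$ and $K_\gamma(\pi)$ depend continuously on $u$ at $u = 1$, both strict inequalities persist on a sufficiently small complex neighborhood of $u = 1$, which provides the $\delta$ used in Lemma \ref{lem:SM}.

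The main obstacle is more organizational than conceptual: the curvature computation in (i) has several unwieldy terms prior to simplification, but once one spots the identity $\overline{e^{re^{i\theta}}}\cdot e^{re^{i\theta}} = e^{2r\cos\theta}$ the expression collapses. After that, parts (ii) and (iii) reduce to elementary one-variable calculus plus a numerical check at $u = 1$ propagated by continuity.
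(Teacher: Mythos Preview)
Your proposal is correct and follows essentially the same approach as the paper: a direct computation of the curvature formula in (i), followed by one-variable calculus on $K_\gamma(\theta)$ for (ii)--(iii), with the strict inequalities verified numerically at $u=1$ and extended by continuity. The only cosmetic differences are that the paper computes curvature directly on $f(\theta)=u(e^{re^{i\theta}}-1)$ rather than factoring out $u$ first, and analyzes monotonicity via the substitution $K_\gamma(\theta)=g(r\cos\theta)$ with $g(t)=(1+t)/(|u|re^t)$ rather than differentiating $K_\gamma$ in $\theta$; both routes yield the same critical-point structure and the same comparison $K_\gamma(\pi)<1<K_\gamma(0)$.
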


\begin{proof}
Let $f(\theta)=u \left( e^{re^{i\theta}} -1 \right)$, so that $\gamma$ is the image of $f$. Then by vector calculus (and using the convenient complex expression for the cross product $\alpha \times \beta = \text{Im} \{ \alpha \beta \}$ of complex numbers $\alpha$, $\beta$ viewed as vectors), $$K_\gamma(\theta) = \frac{f'(\theta)\times f''(\theta)}{\vert f'(\theta) \vert ^3} = \frac{-\text{Im} \{ f'(\theta)\overline{f''(\theta)} \}}{\vert f'(\theta) \vert ^3} = \frac{1+r \cos \theta}{\vert u \vert re^{r \cos \theta} }.$$
%How much detail should I include in this? Also not sure about the notation. 

Then notice that we can write the function $K_{\gamma}(\theta) = g(r \cos \theta)$, where $$g(t)= \frac{1+t}{\vert u \vert re^t}.$$ 
Then $$g'(t) = \frac{ure^t - (1+t) ure^t}{(ure^t)^2} = \frac{-t}{ure^t}$$ 
which is positive for $t<0$ and negative for $t>0$, thus $g(t)$ is monotone increasing over the interval $(-1,0)$ and monotone decreasing over the interval $(0,1)$. Then $K_{\gamma}(\theta) = g(r \cos \theta)$ is increasing for $\theta \in ( - \pi, - \frac{\pi}{2} ) \cup (0, \frac{\pi}{2} )$ and is decreasing for $\theta \in ( - \frac{\pi}{2}, 0 ) \cup ( \frac{\pi}{2}, \pi )$. So $K_{\gamma}(\theta)$ has local minima at $\theta = - \pi, 0, \text{ and } \pi$. Comparing the values of $K_{\gamma}(0)= \frac{1+ r}{|u|r e^r} \approx \frac{1+ \log 2}{2 \log 2} = 1.22134...$ and $K_{\gamma}(\pi) = K (- \pi) = \frac{1-r}{|u| r e^{-r}}  \approx 2 \frac{1-\log 2}{ \log 2 } = 0.88539...$ we see that $K_{\gamma}(0)$ is a local minimum, and the absolute minimum over the interval $[ - \pi, \pi ]$ occurs at $K_{\gamma}(\pi).$

\end{proof}

\begin{proof}[Proof of Lemma \ref{lem:SM}]
We want to show that the point $x=y=\log \left( 1+ \frac{1}{u} \right)$ is the only point in the polydisk 
$$D\left( \left| \log \left( 1+ \frac{1}{u} \right) \right|, \left| \log \left( 1+ \frac{1}{u} \right) \right| \right) $$ 
which satisfies $1-u^2(e^x-1)(e^y-1)=0$. In other words, we need to show that the set $$A= \left\{ u(e^x-1), \ \vert x \vert \leq \left| \log \left( 1+ \frac{1}{u} \right) \right| \right\}$$ intersects the set $$B = \left\{ \frac{1}{u(e^y-1)}, \ \vert y \vert \leq \left| \log \left( 1+ \frac{1}{u} \right) \right| \right\}$$ only at the point $w=1$. 

% First, consider the case that a point $w \in A \cap B$ satisfies $\vert w \vert = 1$. Then $w=1$ corresponds exactly to the point when $x=y= \ln \left( 1+ \frac{1}{u} \right)$. All other points in the set $A$ must have positive imaginary part, and thus since inversion becomes conjugation when $\vert w \vert =1$, are sent under the inversion map to points with negative imaginary part. \textbf{[Losing the conclusion here...]}

Consider points $w \in A \cap B$ such that $\vert w \vert \neq 1$. Note that if $w \in A$, then $\frac{1}{w} \in B$, and similarly if $w \in B$, then $\frac{1}{w} \in A$, and so $w \in A \cap B$ if and only if $\frac{1}{w} \in A \cap B$. Thus, it is enough to consider only the case when $\vert w \vert \geq 1$, since if $\vert w \vert \leq 1$, then $\left| \frac{1}{w} \right| \geq 1$, and hence it suffices to establish the following.

{\bf Goal.}  It suffices to show that $A \cap \left\{ \vert w \vert \geq 1, w \neq 1 \right\}$ is mapped by inversion $w \mapsto 1/w$ to a region that has empty intersection with the set $A$. 
%[Do I need to change notation to make everything match up??] 

With $u$ fixed in a small neighborhood $|u-1| < \delta$ of $u=1$ (with the smallness of $\delta>0$ to be specified later), we use polar representation $w = r e^{i \theta}$ with $r = \left| \log \left(1 + \frac{1}{u}\right) \right|$ to parameterize the boundary of $A$ which is the curve $\gamma=\left\{ u(e^{re^{i\theta}}-1), \  \theta \in [- \pi, \pi ] \right\}$ appearing in Lemma \ref{lem:curve}.  Note that $r = \left| \log \left(1+ \frac{1}{u} \right) \right|$ approaches $\log 2$ as $\delta \rightarrow 0$ since $u$ approaches $1$.

Define the circle $C_1$ to be tangent to $\gamma$ at $w=1$ with curvature $K_{\gamma}(0)$. 

{\bf Claim 1.} The set $A \cap \left\{ \vert w \vert \geq 1, w \neq 1 \right\}$ is contained entirely in the interior of $C_1$.

To prove the claim, it is enough to show that $\gamma \cap \left\{ |w|\geq 1, w \neq 1 \right\}$ is contained in the interior of $C_1$ (indeed, failure of the claim results in $A$ having boundary points in the exterior of $C_1$ satisfying $|w| \geq 1$).  Next notice that by Lemma \ref{lem:curve} ($ii$) $C_1$ contains the part of the curve $\gamma$ for $- \frac{\pi}{2} \leq \theta \leq \frac{\pi}{2}$. Hence, it suffices to show that the part of $\gamma$ outside the unit circle is confined to the points mapped from the interval $- \frac{\pi}{2} \leq \theta \leq \frac{\pi}{2}$.
To see this, we will give an upper bound for $\vert u (e^{re^{i\theta}}-1) \vert$ on the interval $\frac{\pi}{2} \leq \theta \leq \pi$. Notice that
\begin{align*}
u (e^{re^{i\theta}}-1) &= u\left( e^{r \cos \theta} e^{ri \sin \theta} -1 \right) \\
&= u \left( e^{r \cos \theta} \left( \cos(r \sin \theta) + i \sin (r \sin \theta) \right) -1 \right) \\
&= u \left( e^{r \cos \theta} \cos(r \sin \theta) -1 + i e^{r \cos \theta} \sin (r \sin \theta) \right).
\end{align*}

Then for $\frac{\pi}{2} \leq \theta \leq \pi$, looking at this expression in pieces we have the following inequalities: 
$$-1 < \cos \theta < 0 \text{ which implies that } e^{-r} < e^{r \cos \theta} < 1$$
$$0 < r \sin \theta < r \text{ which implies that } \cos r < \cos(r\sin \theta) <1$$
therefore 
$$0 < e^{r \cos \theta} \cos(r \sin \theta) -1 < \vert e^{-r} \cos r -1 \vert.$$
Similarly, 
$$0 < r \sin \theta < r \text{ which implies that } 0 < \sin (r  \sin \theta) < \sin r$$
therefore 
$$ 0 < e^{r \cos \theta} \sin (r \sin \theta) < \sin r. $$

Then since $\vert a + ib \vert = \sqrt{a^2+b^2}$, and using the approximations $u \approx 1$ and $r \approx \log 2$, we have the following upper bound: 
\begin{align*}
\vert u (e^{re^{i\theta}}-1) \vert &\leq \sqrt{\left( e^{-r} \cos r -1 \right) ^2 + (\sin r)^2} \\
&\approx \sqrt{\left(\frac{1}{2} \cos ( \log 2) -1 \right)^2 + \sin^2(\log 2)} \\
&\approx 0.887\dots 
\end{align*}

Thus on the interval $\frac{\pi}{2} \leq \theta \leq \pi$, and also the interval $-\pi \leq \theta \leq - \frac{\pi}{2}$ by symmetry, $\vert u (e^{re^{i\theta}}-1) \vert <1$ and so $\gamma$ remains inside the unit circle, and it is only points of $\gamma$ in the image of the interval $- \frac{\pi}{2} \leq \theta \leq \frac{\pi}{2}$ that lie outside the unit circle. 

Define the circle $C_2$ to be the image of $C_1$ under the mapping $z \rightarrow \frac{1}{z}$. Since this mapping is a M\"{o}bius transformation, the image of $C_1$ must be a circle, and by conformality it is also tangent to $\gamma$ at $w=1$. 

{\bf Claim 2.} The circle $C_2$ contains the set $A \setminus \{1\}$ entirely in its interior. 

To prove the claim, we show that the curvature of $C_2$ is smaller than the minimal curvature of $\gamma$.

First consider the case when $u=1$ (the case of $u$ near $1$ will then follow by continuity), the curvature of $C_1$ is given by $K_{\gamma}(0)=\frac{1+ \log 2}{2 \log 2} \approx 1.2213\dots$ For a circle, the curvature is the reciprocal of the radius, so the radius of $C_1$ is $R_1 = \frac{2 \log 2}{1+ \log 2}$. We know $C_1$ intersects the real axis at $z=1$, and by symmetry (which holds in the case $u=1$) we obtain the other point it intersects the real axis, $\alpha$, by subtracting twice the radius to obtain, $\alpha = 1 - 2 R_1$. Under the inversion, the image of this point falls on the real axis at $\frac{1}{\alpha} = \frac{1}{1-2 R_1}$, and this determines the value $R_2 = \frac{1}{2}\left( 1 - \frac{1}{1-2 R_1} \right) = \frac{R_1}{2R_1 - 1}$ of the radius of $C_2$, and thus the curvature, $K_2 = 1/R_2 = 0.7786\dots$ We note that by Lemma \ref{lem:curve} ($iii$) this is smaller than the absolute minimum of the curvature of $\gamma$ and thus $\gamma$ is completely contained within the circle $C_2$. Since the curvature of $C_2$, as well as the minimum curvature of $\gamma$, changes continuously with respect to changes in the value of $u$, for $u$ sufficiently close to $1$ (i.e., for $\delta > 0$ sufficiently small), the same comparison of curvatures holds, so that the entirety of $\gamma$ is completely inside of the circle $C_2$, as stated in the claim.

Now we apply both claims to finish the proof of the lemma. Since the interior of $C_1$ is mapped by inversion to the exterior of $C_2$, Claim 1 implies that the set $A \cap \left\{ \vert w \vert \geq 1, w \neq 1 \right\}$ is mapped by inversion to a set contained in the exterior of the circle $C_2$.  By Claim 2, this then establishes that $A \cap \left\{ \vert w \vert \geq 1, w \neq 1 \right\}$ is mapped by inversion $w \mapsto 1/w$ to a region that has empty intersection with set $A$. This verifies the above-stated Goal and concludes the proof of the lemma.
\end{proof}

\section{Open Problems}\label{sec:concl}

Let us point out some natural directions for further research.  One obvious direction to consider is Cameron's question (mentioned at the beginning of the paper) restricted to other settings besides complete bipartite graphs, for instance, what can be said about the distribution of the longest path length for acyclic orientations on complete multi-partite graphs?  What about complete graphs?  It also seems natural to ask about random acyclic orientations on a random graph (sampling first the random graph from some model, such as the Erd\"os-Renyi model)?

Staying within the setting of complete bipartite graphs, another set of open problems to consider concerns other (non-diagonal case) asymptotic regimes.  For instance, is the outcome still Gaussian in other directions of the array, i.e., when $n\rightarrow \infty$ with $k= \alpha \cdot n$ for all $0<\alpha<1$ fixed? We suspect an affirmative answer, with the expectation that Theorem \ref{thm:PW} still applies and seems to carry a similar structure, albeit less explicit and more computationally involved.  Hence, it may still be possible to establish the quasi-power condition (even if the asymptotic mean and variance can not be specified explicitly).
Outside the setting where $k$ is linear in $n$, one may first consider the case when $k$ is fixed, and $n \rightarrow \infty$.  We expect a singular limit in this case with the distribution concentrating at the maximal longest length possible $2k-1$.  This raises the question of whether there are interesting (with non-singular and non-Gaussian limit) intermediate regimes,  when $n,k \rightarrow \infty$ with $k$ growing more slowly than linearly in $n$.

\bibliographystyle{abbrv}
\bibliography{ref}

\end{document}